\title[The monoid of monotone functions]{The monoid of monotone functions on a poset\\and quasi-arithmetic multiplicities for uniform matroids}
\author[Bruns]{Winfried Bruns}
\address{Institut f\"ur Mathematik\\Universit\"at Osnabr\"uck\\49074 Osnabr\"uck, Germany}
\email{wbruns@uos.de}
\author[García-Sánchez]{Pedro A. García-Sánchez}
\thanks{The second author is supported by the project MTM2017--84890--P, which is funded by Ministerio de Economía y Competitividad and Fondo Europeo de Desarrollo Regional FEDER, and by the Junta de Andalucía Grant Number FQM--343. The third author is supported by the project PRIN 2017YRA3LK, funded by MIUR, Italy.}
\address{Departamento de \'Algebra and IEMath-GR\\Universidad de Granada\\18071 Granada, Espa\~na}
\email{pedro@ugr.es}
\author[Moci]{Luca Moci}
\address{Dipartimento di Matematica, Piazza Porta S. Donato 5, 40126 Bologna, Italy}
\email{luca.moci2@unibo.it}
\date{\today}
\newtheorem{theorem}{Theorem}
\newtheorem{lemma}[theorem]{Lemma}
\newtheorem{proposition}[theorem]{Proposition}
\newtheorem{corollary}[theorem]{Corollary}
\newtheorem{problem}[theorem]{Problem}
\theoremstyle{remark}
\newtheorem{example}[theorem]{Example}
\newtheorem{remark}[theorem]{Remark}
\theoremstyle{definition}
\newtheorem{conjecture}[theorem]{Conjecture}
\def\cI{\mathcal{I}}
\def\cJ{\mathcal{J}}
\def\cK{\mathcal{K}}
\def\rM{\mathrm{M}}
\def\rC{\mathrm{C}}
\def\rL{\mathrm{L}}
\def\rU{\mathrm{U}}
\def\rI{\mathrm{I}}
\def\ZZ{{\mathbb Z}}
\def\NN{{\mathbb N}}
\def\QQ{{\mathbb Q}}
\DeclareMathOperator{\supp}{supp}
\DeclareMathOperator{\UAR}{\uparrow\negthickspace}
\def\preldiscuss{\def\d@scuss##1##2{\marginparsep=1em\marginparwidth=60pt
		\marginpar{\tt \tiny \raggedright\color{##1} ##2}}}
\def\d@scuss#1#2{\relax} 
\def\discuss#1{\d@scuss{black}{#1}}
\begin{document}

\maketitle

\begin{abstract}
We describe the structure of the monoid of natural-valued monotone functions on an arbitrary poset. For this monoid we provide a presentation, a characterization of prime elements, and a description of its convex hull. We also study the associated monoid ring, proving that it is normal, and thus Cohen-Macaulay. We determine its Cohen-Macaulay type, characterize the Gorenstein property, and provide a Gröbner basis of the defining ideal. Then we apply these results to the monoid of quasi-arithmetic multiplicities on a uniform matroid. Finally we state some conjectures on the number of irreducibles for the monoid of  multiplicities on an arbitrary matroid.
\end{abstract}


\section{Introduction}
Natural-valued monotone functions are ubiquitous in mathematics, and as we will see, they are tightly related to monotone Boolean functions. The study of monotone Boolean functions goes back at least to Dedekind \cite{dedekind}, and was continued by Church, Ward and others (see \cite{S-Y} and the references therein). This paper is devoted to the structure of the monoid of natural-valued monotone functions on an arbitrary finite poset $P$ (partially ordered set). 

If instead of monotone functions we consider order-reversing functions, then we come to the concept of $P$-partitions, that were studied by Stanley \cite[Chapter II]{stanley-osp}, \cite[Section~3.15]{stanley-ec1} and later by many other authors (see \cite{p-partitions} and the references therein). The study of both concepts, natural-valued monotone functions on a finite poset $P$ and $P$-partitions, is essentially equivalent.
In \cite{p-partitions}, the complete intersection property of the monoid ring of $P$-partitions was characterized in terms of forests with duplications, the graded ring of the monoid ring was described, and generating functions counting $P$-partitions were computed.

Our motivating example comes from matroid theory. Matroids axiomatize the linear algebra of lists of vectors. For instance, the uniform matroid $\rU(r,n)$ encapsulates the linear dependencies of a list of $n$ vectors in generic position in an $r$-dimensional space, that is, all the sublists of cardinality smaller than or equal to $r$ are linearly independent. Arithmetic matroids were introduced in \cite{m-t-p}, in relation with an invariant called the arithmetic Tutte polynomial \cite{MoT}, and since then proved to have a wide number of applications. Recent advances in understanding their structure have been achieved in \cite{DFM, Pag, PaPa}. A \emph{quasi-arithmetic matroid} is a matroid together with a suitable function called a \emph{multiplicity}; it is called \emph{arithmetic} if it satisfies an additional axiom. In \cite{DeMo}, Delucchi and the last author remarked that the set of quasi-arithmetic multiplicities on a given matroid $M$ is a monoid $\mathcal Q(M)$, and proved that arithmetic multiplicities form a submonoid $\mathcal A(M)$ of $\mathcal Q(M)$. 

In this paper we describe the structure of the monoid $\mathcal Q(M)$, with a special focus on the case of uniform matroids. Via an appropriate prime-wise slicing of the monoid, we can translate the problem of studying multiplicities on a given uniform matroid to the study of additive submonoids of a cartesian product of copies of the monoid $(\mathbb N, +)$. These submonoids are isomorphic to the set of monotone functions over a partially ordered set. 
\medskip

The paper is organized as follows. In Section~\ref{sec:monotone-functions} we study the structure of the monoid of natural-valued monotone functions on an arbitrary finite poset, describing its set of irreducibles (Proposition~\ref{prop:irred-pos}), a presentation of the associated monoid (Theorem~\ref{th:canonical-sg}, which is similar to \cite[Theorem 2.1]{p-partitions} for $P$-partitions), and the cone arising as its convex hull (Theorem~\ref{theo::supp}). We provide a  Gr\"obner basis for its defining ideal (Theorem~\ref{th:groebner-im}), characterize the Gorenstein property (Theorem~\ref{theo:Gor}), and describe the Cohen-Macaulay type of the monoid ring for certain posets (Proposition \ref{rem:type}) .
An irreducible monotone function over a partially ordered set rarely is a prime element of the monoid: in Theorems 
\ref{th:char-prime} and~\ref{th:char-prime-comb} we give a characterization of irreducible monotone functions that are prime.

In Section 3 we focus on the monoid of quasi-arithmetic multiplicities of a matroid. First we describe its structure in general, as a direct product of simpler monoids (Proposition \ref{infprod}); then we describe every factor in the case of uniform matroids (Proposition \ref{prop:gm-mf}). Then, by applying results of Section 2,  we determine when a slice of the set of multiplicity functions over an uniform matroid is Gorenstein (Theorem~\ref{prop:type}), and also compute the Cohen-Macaulay type in some extremal cases. Furthermore, in Theorem~\ref{prop:uni_prime} we characterize the irreducible and the prime elements of this monoid.

We finish our work by providing a couple of conjectures on the number of irreducibles, for which we have some experimental evidence.

\subsection*{Acknowledgements}
We are grateful to Marco D'Anna, Vic Reiner and Fengwei Zhou for helpful conversations. We also wish to thank an anonymous referee for many valuable remarks.

Experiments were performed with the help of  \texttt{Normaliz} \cite{normaliz}, \texttt{NormalizInterface} \cite{normalizinterface} and \texttt{NumericalSgps} \cite{numericalsgps} (the last two are GAP \cite{gap} packages). With these, we were able to run batteries of examples to foresee the results we proved later.

\section{Monotone functions on a partially ordered set}\label{sec:monotone-functions}

Given a partially ordered set $(P,\le)$ and a subset $X\subseteq P$, set 
$$
\UAR  X =\{y\in P\mid x\le y \hbox{ for some } x\in X\}.
$$
An \emph{upper set} in a partially ordered set $(P,\le)$ is a subset $U$ with the property that   $\UAR U= U$. 

For an ordered set $(P,\le)$, a function $f:P\to \mathbb N$ is \emph{monotone} if $f(a)\le f(b)$ whenever $a\le b$ ($\mathbb{N}$ denotes the set of nonnegative integers). By $\rM(P)$ we denote the set of monotone functions over $P$. 

Clearly, if $U$ is an upper subset of $P$, then $\chi_U\in \rM(P)$. Here $\chi_U$ is the \emph{indicator function} of $U$: $\chi_U(x)=1$ for $x\in U$, and $\chi_U(x)=0$ otherwise.

We can define on $\rM(P)$ addition as $(f+g)(p)=f(p)+g(p)$ for all $p\in P$. Under this binary operation, $\rM(P)$ is a commutative monoid.

A nonempty upper set is \emph{irreducible} if it is not the union of two disjoint upper sets.

\begin{lemma}\label{lem:exp-union-irr}
Let $(P,\le)$ be a finite partially ordered set, and let $U$ be an upper set of $P$. Then $U$ can be expressed (uniquely) as the disjoint union of irreducible upper sets of $P$.
\end{lemma}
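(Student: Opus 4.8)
The plan is to exploit the natural equivalence relation generated by the order on $P$. First I would observe that the irreducible upper sets are exactly the ones whose underlying subposet is ``connected'' in an appropriate sense. Define a relation on $P$ by declaring $x$ and $y$ related if they are comparable, and let $\sim$ be the equivalence relation it generates; the equivalence classes are the connected components of the comparability graph of $P$. The key point is that each connected component $C$ of the comparability graph is itself an upper set whenever it meets an upper set — more precisely, if $U$ is an upper set and $C$ a connected component of the comparability graph of the full poset $P$, then $U\cap C$ is a union of connected components of the comparability graph of $U$, and in fact $U$ decomposes as the disjoint union of the sets $U\cap C$ over those components $C$ meeting $U$. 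Each such $U\cap C$ is an upper set: if $x\in U\cap C$ and $x\le y$, then $y\in U$ since $U$ is upper, and $y\in C$ since $x,y$ are comparable hence in the same component.

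Next I would establish that each $U\cap C$ is irreducible. Suppose $U\cap C = V\sqcup W$ with $V,W$ nonempty disjoint upper sets. Pick $v\in V$ and $w\in W$. Since $v,w$ lie in the same connected component of the comparability graph, there is a path $v=z_0, z_1,\dots, z_k=w$ in which consecutive elements are comparable, and all $z_i$ lie in $C$; moreover each $z_i$ lies in $U\cap C$ because it is comparable to an element of the upper set $U\cap C$ (walking along the path, comparability to a point of $U$ forces membership in $U$ when the point is above, and membership in $U$ is inherited downward along the path only through the upper-set property — here one should be slightly careful and instead argue: $z_1$ is comparable to $z_0=v\in U\cap C$; if $z_0\le z_1$ then $z_1\in U$; if $z_1\le z_0$ we cannot immediately conclude, so the cleanest route is to note that $U$ is an upper set of $P$ and to choose the path inside the comparability graph of $U$ itself, which is legitimate since $U\cap C$ is a union of components of that graph, so all $z_i\in U$). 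Then consecutive $z_i, z_{i+1}$ are comparable and both in $U\cap C=V\sqcup W$; since $V$ and $W$ are both upper sets, whichever of $z_i,z_{i+1}$ is larger lies in the same part as the smaller one (the larger inherits membership from an upper set; for the smaller, use that the decomposition is a partition and the larger cannot be in the other part). Hence all $z_i$ lie in the same part, contradicting $v\in V$, $w\in W$. So $U\cap C$ is irreducible.

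For existence of the decomposition, I would simply take $U=\bigsqcup_{C} (U\cap C)$ over the components $C$ of the comparability graph of $U$; by the previous paragraph each piece is an irreducible upper set, and they are pairwise disjoint. For uniqueness, suppose $U=\bigsqcup_i U_i$ is any expression as a disjoint union of irreducible upper sets. Each $U_i$, being an upper set, is a union of components of the comparability graph of $U$; since $U_i$ is irreducible (hence connected as a subset of that graph — an irreducible upper set must lie in a single component, else it splits along components into disjoint upper sets), $U_i$ is contained in a single component $C_i$, and conversely $U_i$ is all of $U\cap C_i$ because $U\cap C_i$ is the disjoint union of those $U_j$ with $C_j=C_i$ and $U\cap C_i$ is itself irreducible so cannot properly split. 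Thus the $U_i$ are exactly the sets $U\cap C$, proving uniqueness.

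The main obstacle I anticipate is the bookkeeping in the irreducibility argument: one must be careful that the path witnessing connectedness can be taken to lie inside $U$ (not merely inside $P$), and that an upper set which is disconnected in its own comparability graph genuinely splits as a disjoint union of two nonempty upper sets — this last fact is the crux and follows because a union of a proper nonempty subfamily of connected components is always an upper subset (comparability cannot cross components). Once that lemma-within-the-lemma is cleanly stated, both existence and uniqueness fall out quickly; finiteness of $P$ is used only to guarantee finitely many components, though the statement and argument would in fact go through for any poset.
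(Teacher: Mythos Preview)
Your approach is correct and takes a different, more structural route than the paper. The paper proves existence by a bare induction on $|U|$ (if $U$ is not irreducible, split it into two disjoint upper sets and recurse) and dispatches uniqueness in one line: an irreducible $V$ contained in a disjoint union $\bigsqcup W_i$ meets each $W_i$ in an upper set $V\cap W_i$, so by irreducibility of $V$ only one of these is nonempty. Your argument instead \emph{identifies} the irreducible pieces explicitly as the connected components of the comparability graph of $U$, yielding the characterization ``irreducible upper set $=$ connected in the comparability graph'' that the paper's proof leaves implicit. This buys you a constructive description of the decomposition, at the cost of the path-bookkeeping you flag.

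One expositional wobble worth tightening: you begin with $C$ a component of the comparability graph of the \emph{whole poset} $P$ and then assert that $U\cap C$ is irreducible. That claim is false in general: take $P=\{a,b,c\}$ with $a<b$ and $a<c$, and $U=\{b,c\}$; then $P$ has a single component $C=P$, but $U\cap C=U=\{b\}\sqcup\{c\}$ is reducible. Your parenthetical fix (``choose the path inside the comparability graph of $U$ itself'') does not rescue this, since $b$ and $c$ lie in different components of $U$'s graph and no such path exists. You do switch, correctly, to components of the comparability graph of $U$ in the existence and uniqueness paragraphs, and the irreducibility argument goes through cleanly for \emph{those} components; commit to that choice of $C$ from the outset and the proof is complete.
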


\begin{proof}
We proceed by induction on $|U|$. If $U$ is not irreducible, then it is the union of two disjoint upper sets, which are disjoint union of irreducible upper sets by inductive hypothesis. 

The uniqueness of the decomposition is obvious: an irreducible upper set $V$ is only contained in the disjoint union of upper sets $W_i$ if $V\subset W_i$ for some $i$.
\end{proof}

For a map $f:P\to \mathbb{N}$, its \emph{support} is defined as 
$$
\supp(f)=\{p \in P \mid f(p)\neq 0\}. 
$$ 
Clearly, if $f$ is monotone, then $\supp(f)$ is an upper set of $P$. This fact, together with Lemma~\ref{lem:exp-union-irr}, is the key to see what  the irreducibles of $\rM(P)$ are, that is, monotone functions over $P$ that cannot be expressed as the sum of two other nonzero monotone functions over $P$.

\begin{proposition}\label{prop:irred-pos}
Let $(P,\le)$ be a finite partially ordered set. The monoid $(\rM(P),+)$ is minimally generated by 
$$
\bigl\{\chi_I\mid I\hbox{  irreducible upper set of }P\bigr\}.
$$ 
Its rank is $|P|$.
\end{proposition}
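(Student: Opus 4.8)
The plan is to prove the statement in three steps: that the indicator functions $\chi_I$ with $I$ an irreducible upper set generate $\rM(P)$; that each such $\chi_I$ is an atom of the monoid $\rM(P)$ (i.e.\ not a sum of two nonzero monotone functions); and that the rank is $|P|$. The first two steps together identify $\{\chi_I\mid I\text{ irreducible}\}$ as the minimal generating system, because a finitely generated, cancellative, reduced monoid has its set of atoms as its unique minimal generating system.

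For the generation step, given $f\in\rM(P)$ I would decompose it along its superlevel sets. For each integer $k\ge 1$ set $U_k=\{p\in P\mid f(p)\ge k\}$; monotonicity of $f$ makes every $U_k$ an upper set, only finitely many $U_k$ are nonempty since $P$ is finite, and the pointwise identity $f(p)=\#\{k\ge 1\mid f(p)\ge k\}$ is precisely $f=\sum_{k\ge 1}\chi_{U_k}$. By Lemma~\ref{lem:exp-union-irr} each $U_k$ is a disjoint union of irreducible upper sets, and for a disjoint union the indicator function equals the sum of the indicator functions of the parts; substituting, $f$ is displayed as an $\NN$-linear combination of the claimed generators.

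For the atom step, suppose $\chi_I=g+h$ with $g,h\in\rM(P)$. Since $g$ and $h$ are nonnegative and their sum takes only the values $0$ and $1$, each of $g,h$ takes only the values $0$ and $1$; hence $g=\chi_{\supp g}$, $h=\chi_{\supp h}$, and $\supp g\sqcup\supp h=I$ is a partition into two sets, which are upper sets because $g,h$ are monotone. If both parts were nonempty this would contradict the irreducibility of $I$, so $g=0$ or $h=0$, proving that $\chi_I$ is an atom. Since $\rM(P)$ is finitely generated (by the previous step), cancellative and reduced — being a submonoid of $\NN^P$ — its atoms form its unique minimal generating system; by the two steps the $\chi_I$ generate and each is an atom, so the minimal generating system is exactly $\{\chi_I\mid I\text{ irreducible}\}$. (Minimality can alternatively be checked directly, again via the remark that a summand of a $0/1$-valued function is $0/1$-valued.)

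Finally, $\operatorname{rank}\rM(P)$ is the rank of the subgroup of $\ZZ^P$ generated by $\rM(P)$, hence at most $|P|$. For the reverse inequality I would use the principal upper sets $\uar\{p\}=\{q\in P\mid p\le q\}$, which lie in $\rM(P)$: the matrix $\bigl(\chi_{\uar\{p\}}(q)\bigr)_{q,p\in P}=\bigl([p\le q]\bigr)_{q,p\in P}$ is the zeta matrix of $P$, and after listing the elements of $P$ according to a linear extension of $\le$ it is lower unitriangular, hence invertible; so the $|P|$ functions $\chi_{\uar\{p\}}$ are linearly independent in $\ZZ^P$ and $\operatorname{rank}\rM(P)=|P|$. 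The only point needing care is the atom step, whose crux is simply that a summand of a $0/1$-valued function is again $0/1$-valued; granted that, the irreducibility of $I$ finishes it, while the superlevel-set decomposition and the zeta-matrix computation are routine.
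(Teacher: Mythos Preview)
Your proof is correct and essentially matches the paper's: your superlevel-set decomposition $f=\sum_{k\ge 1}\chi_{U_k}$ is precisely the paper's iterated subtraction of $\chi_{\supp f}$ (the $k$-th superlevel set is the support after $k-1$ subtractions), and your explicit atom argument fills in what the paper dismisses as ``clear''. The only real variation is in the rank step, where the paper exhibits the tails $\{a_i,\dots,a_n\}$ of a linear extension as $|P|$ linearly independent indicator functions, whereas you use the principal upper sets $\uar\{p\}$ and the unitriangularity of the zeta matrix---an equivalent and arguably cleaner choice.
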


\begin{proof}
Let $f$ be a monotone function over $P$. We already know that $\supp(f)$ is an upper set. By Lemma~\ref{lem:exp-union-irr}, there exists a family $\cI$ of upper sets such that $\supp(f)=\dot\bigcup_{I\in\cI} I$ (disjoint union). Then $\chi_{\supp(f)} = \sum_{I\in \cI} \chi_I$. Clearly, $f'=f-\chi_{\supp(f)}$ is a monotone function over $P$. We can repeat the process with $f'$ until we reach the zero function (this process will stop since $P$ has finitely many elements and the values of $f$ are nonnegative integers). 

This shows that $\rM(P)$ is generated by the set of elements $\chi_I$ with $I$ an irreducible upper set. It is also clear that every $\chi_I$, with $I$ an irreducible upper set, is an irreducible. Being contained in $\mathbb{N}^{|P|}$, the monoid $\rM(P)$ is minimally generated by its irreducibles. This can be seen as follows. We set 
$$
\deg(f)=\sum_{x\in P} f(x).
$$ 
If $f\neq 0$ is not irreducible, then $f=g+h$ with $g,h\neq 0$. Since $\deg(g), \deg (h)<\deg(f)$, it follows by induction that the irreducibles generate $\rM(P) $, and it is clear that the irreducibles are contained in every system of generators (see also \cite[Chapter 3, Exercise 6]{f-g}).

Since $\rM(P)\subset \ZZ^{|P|}$, its rank is at most $|P|$. On the other hand, there is a strictly ascending chain of upper sets of length $|P|$:  we take a linear refinement of the partial order that lists $P=\{a_1,\dots,a_n\}$ in ascending order, and consider the upper sets $\{a_i,\dots,a_n\}$, $i\in\{1,\dots n\}$. Their characteristic functions are linearly independent.
\end{proof}

We can sharpen this last proposition to obtain a canonical expression of a monotone function in terms of the $\chi_I$ with $I$ an irreducible upper set. To this end we need to introduce the concept of near-chain.

\subsection{Near-chains}

A \emph{near-chain} of irreducible upper sets in a partially  ordered set $(P,\le)$ is a set $\cI =\{I_1,\dots, I_m\}$ of irreducible upper sets such that one of the following relations holds for all $I_i,I_j\in\cI$: $I_i\subset I_j$, $I_j\subset I_i$ or $I_i\cap I_j=\emptyset$. In \cite{p-partitions}, the corresponding concept of near-chain for order ideals is a multiset of nonempty connected order ideals that pairwise intersect trivially.
The following result has an analogue for $P$-partitions; see Section 1.2 in \cite{p-partitions}.

\begin{proposition}\label{prop:exp-nc}
Let $(P,\le)$ be a finite partially ordered set. Let $f\in \rM(P)$. Then there exist a unique near-chain $\cI$ and uniquely determined positive integers  $a_I$, for $I\in\cI$, such that
$$
f=\sum_{I \in\cI} a_I \chi_I.
$$
\end{proposition}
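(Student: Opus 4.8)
The plan is to prove existence first, then uniqueness, by induction on $\deg(f)=\sum_{x\in P}f(x)$. For existence, I would refine the argument in the proof of Proposition~\ref{prop:irred-pos}: given $f\neq 0$, look at $\supp(f)$, which is an upper set, and consider its values. Rather than subtracting $\chi_{\supp(f)}$ directly, the natural move is to work level by level. Let $m = \min\{f(p) : p\in\supp(f)\}$; then on the upper set $U_{>m}=\{p : f(p)>m\}$ (which is indeed an upper set since $f$ is monotone) decompose $U_{>m}$ into irreducibles, and on $\supp(f)$ decompose that into irreducibles. One has to be a little careful: the cleanest route is to say that $f$ attains finitely many values $0=c_0<c_1<\dots<c_k$, and $f=\sum_{j=1}^k (c_j-c_{j-1})\chi_{U_j}$ where $U_j=\{p:f(p)\ge c_j\}$, each $U_j$ an upper set. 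Then decompose each $U_j$ into its irreducible components via Lemma~\ref{lem:exp-union-irr} and collect terms; this writes $f$ as a nonnegative combination of $\chi_I$. The collection $\cI$ of irreducible upper sets that actually occur forms a near-chain: if $I$ occurs in the decomposition of $U_j$ and $I'$ in that of $U_{j'}$ with $j\le j'$, then $U_{j'}\subseteq U_j$, so $I'$ is contained in some irreducible component of $U_j$ — but distinct irreducible components of $U_j$ are disjoint, so either $I'\subseteq I$ or $I'\cap I=\emptyset$; and if $j=j'$ with $I\neq I'$ they are disjoint. Hence the pairwise comparability-or-disjointness condition holds, and the coefficients $a_I=\sum_{j : I\text{ a component of }U_j}(c_j-c_{j-1})$ are positive.

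For uniqueness, suppose $f=\sum_{I\in\cI}a_I\chi_I=\sum_{J\in\cJ}b_J\chi_J$ with both $\cI,\cJ$ near-chains and all coefficients positive. I would compare the two representations by peeling off the maximal sets. First observe $\supp(f)=\bigcup_{I\in\cI}I=\bigcup_{J\in\cJ}J$. The maximal elements of $\cI$ under inclusion are pairwise disjoint (near-chain property), so they are exactly the irreducible components of $\supp(f)$ — and likewise for the maximal elements of $\cJ$; by the uniqueness in Lemma~\ref{lem:exp-union-irr}, the maximal sets of $\cI$ coincide with the maximal sets of $\cJ$. For such a maximal set $I$, pick a point $p$ that lies in $I$ but in no proper subset of $I$ appearing in $\cI\cup\cJ$ (possible because all those proper subsets are, by the near-chain property, contained in $I$, and none equals $I$; a point of $I$ minimal with respect to not lying in any of them exists since these are finitely many upper sets properly contained in $I$, and an irreducible upper set is not the union of the proper upper subsets... this point needs the small lemma below). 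Evaluating both representations at $p$ gives $a_I=f(p)=b_I$. Now subtract $a_I\chi_I$ from $f$ for every maximal $I$; the result $f'$ has strictly smaller degree, is still monotone, and equals $\sum$ over the non-maximal elements of $\cI$ (resp. $\cJ$) with the same positive coefficients, each still a near-chain. By induction these two smaller representations agree, hence so do the original ones.

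The main obstacle is the selection of a point $p\in I$ that detects the coefficient $a_I$ cleanly, i.e. a point of $I$ avoiding every member of $\cI\cup\cJ$ properly contained in $I$. The key sub-lemma is: if $I$ is an irreducible upper set and $V_1,\dots,V_t$ are upper sets each properly contained in $I$, then $I\neq V_1\cup\dots\cup V_t$. Indeed, if it were, one could split $I$ into a disjoint union of two nonempty upper sets — this is essentially the statement that an irreducible upper set has a unique maximal element when $P$ is connected on $I$, or more carefully: the irreducible components of $I$ (as an upper set in its own right) number exactly one, and any covering of $I$ by proper upper subsets would let us partition $I$ along its connectivity and contradict irreducibility. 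I would state and prove this sub-lemma separately (it is short: if $I=\bigcup V_i$ with each $V_i\subsetneq I$ an upper set, take $x\in I\setminus V_1$; the upper set $\uar x\subseteq I$ lies entirely in $V_2\cup\dots\cup V_t$ or forces a new decomposition — one argues by minimal counterexample that $I$ decomposes as a disjoint union of upper sets, contradicting irreducibility). Once this sub-lemma is in hand, the rest is the routine induction sketched above, and I would remark, as the statement already does, that this parallels Section~1.2 of \cite{p-partitions} for $P$-partitions.
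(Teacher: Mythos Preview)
Your existence argument via level sets is correct and a bit more explicit than the paper's: the paper inducts by subtracting $\chi_{\supp(f)}$ and observing that every irreducible in the representation of $f-\chi_{\supp(f)}$ sits inside one component of $\supp(f)$, while you write the whole decomposition at once as $f=\sum_j(c_j-c_{j-1})\chi_{U_j}$ and verify the near-chain property from the nesting $U_1\supseteq U_2\supseteq\cdots$. The two routes are close cousins (iterating the paper's subtraction reproduces your level sets), but your verification that the irreducible components assemble into a near-chain is clean and self-contained.

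The uniqueness argument, however, has a genuine gap. Your key sub-lemma---that an irreducible upper set $I$ is never the union of finitely many proper upper subsets---is \emph{false}. Take $P=\{a,b,c\}$ with $a<c$ and $b<c$: then $I=P$ is irreducible (every nonempty upper set contains $c$, so no two are disjoint), yet $I=\{a,c\}\cup\{b,c\}$ with both pieces proper upper subsets. Your parenthetical sketch conflates ``covered by proper upper subsets'' with ``split as a disjoint union of upper subsets'', and that is exactly what fails here. Consequently you cannot in general locate a single point $p\in I$ avoiding every proper member of $\cI\cup\cJ$ contained in $I$.

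The repair is small. You do not need a point that works for both near-chains simultaneously. For each maximal $I$, choose $p\in I$ avoiding the proper members of $\cI$ alone: within the near-chain $\cI$ the maximal proper subsets of $I$ are pairwise disjoint, so their union is a disjoint union of irreducible upper sets properly contained in the irreducible $I$, hence a proper subset. At such $p$ one gets $f(p)=a_I$, so $a_I=\min_{x\in I}f(x)$. The same reasoning with $\cJ$ yields $b_I=\min_{x\in I}f(x)$, whence $a_I=b_I$, and your induction goes through. Alternatively---and this is what the paper does, more simply---just note that the maximal members of any near-chain representation are the irreducible components of $\supp(f)$, subtract $\chi_{\supp(f)}$ (reducing each maximal coefficient by $1$), and invoke induction on degree; no pointwise evaluation is needed.
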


\begin{proof}
Let $S=\supp(f)$. Then $S$ has a unique representation as the union of disjoint irreducible upper sets, say $S=\dot\bigcup_{J\in \cK} J$ (Lemma~\ref{lem:exp-union-irr}). Thus $\chi_S=\sum_{J\in\cK} \chi_J$. 

For $f=0$ we take $\cI=\emptyset$. For $f\neq 0$ we can assume that the assertion on existence holds for $f-\chi_S$ since $\deg(f-\chi_S)<\deg(f)$ where $\deg$ is defined as above. Any irreducible upper set appearing in the representation $f-\chi_S=\sum_{I \in\cI'} a_I' \chi_I$ must be contained in one of the disjoint irreducible upper sets $J\in\cK$, and therefore $\cI = \cK \cup \cI'$ is a near-chain. Set $a_I=1$ if  $I\in \cK\setminus \cI'$, $a_I = a_I'$ for $I\in \cI'\setminus \cK$ and $a_I=a_I'+1$ for $I\in \cK\cap \cI'$.

Uniqueness is proved similarly. If we have a representation $f=\sum_{I \in\cI} a_I \chi_I$ with a near-chain $\cI$, then $S$ is the union of the maximal elements in $\cI$. Therefore they form the set $\cK$ from above, and we are again done by induction.
\end{proof}

\subsection{Relations and defining ideal}
We already know the generators of $\rM(P)$ for $P$ a finite partially ordered set. Let us now describe this monoid in terms of generators and relations. 

Let $\mathcal{F}$ be the free monoid on the irreducible upper sets of $P$. Then the morphism $\varphi$ from $\mathcal{F}$ to $\rM(P)$ induced by $I\mapsto \chi_I$ is an epimorphism, and $\rM(P)$ is isomorphic to $\mathcal{F}/\ker\varphi$, where $\ker\varphi$ is the kernel congruence of $\varphi$, that is, the set of pairs $(x,y)$ such that $\varphi(x)=\varphi(y)$. A system of generators of $\ker\varphi$ is known as a \emph{presentation} of $\rM(P)$. A system of generators that solves the word problem for a given admissible total order on $\mathcal{F}$ is known as a canonical basis of $\ker\varphi$ (see \cite{f-g}).

Let $\cI$ be a family of irreducible upper sets, and let $a_I$ be a positive integer for all $I\in\cI$. We define the \emph{degree} of the expression $\sum_{I\in\cI} a_I \chi_I$ as 
$$
\deg\left(\sum\nolimits_{I\in \cI}a_I \chi_I\right)= \sum_{I\in\cI}a_I \lvert I\rvert.
$$
In particular we have $\deg(\chi_I) = \vert I\vert$. Note that this definition of degree is consistent with the one in the proof of Proposition \ref{prop:irred-pos}: when we specialize the formal expression to a function on $P$, the degree stays the same.

We fix a total order $\prec$ of all $\chi_I$ with $I$ irreducible such that $\deg \chi_I\le \deg \chi_J$ implies $\chi_i \preceq \chi_J$. We then extend it to an order on the set of all formal expressions $\sum\nolimits_{I\in \cI}a_I \chi_I$. We write $\sum\nolimits_{I\in \cI}a_I \chi_I\preceq \sum\nolimits_{J\in \cJ}a_J \chi_J$ with $\mathcal I$ and $\mathcal J$ sets of irreducible upper sets and $a_I$, $a_J$ positive integers, whenever 
\begin{enumerate}
\item $\deg(\sum\nolimits_{I\in \cI}a_I \chi_I)<\deg(\sum\nolimits_{J\in \cI}a_J \chi_J)$, or
\item  $\deg(\sum\nolimits_{I\in \cI}a_I \chi_I)=\deg(\sum\nolimits_{J\in \cJ}a_J \chi_J)$ and $\sum\nolimits_{I\in \cI}a_I \chi_I$ is smaller than or equal to $\sum\nolimits_{J\in \cJ}a_J \chi_J$ with respect to the reverse lexicographical order induced by the order of the $\chi_I$.
\end{enumerate}
In total we have defined a degree reverse lexicographical order. Note that degree is evaluated first. For example, if $|I| < |J| < 2|I|$, then $\chi_I\prec \chi_ J \prec 2\chi_I$.

Let $I$ and $J$ be two irreducible upper sets such that one is not contained in the other and they have nonempty intersection, that is, $\{I,J\}$ is not a near-chain of $(P,\le)$. Then $\chi_I+\chi_J=\chi_{I\cap J}+\chi_{I\cup J}$. We can then express $I\cap J$ as a disjoint union of irreducible upper sets, say $I\cap J = \dot\bigcup_{U\in \mathcal{U}} U$, and the same for $I\cup J=\dot\bigcup_{V\in \mathcal{V}} V$. Then 
\begin{equation}\label{eq:no-nc}
\chi_I+\chi_J=\sum_{U\in \mathcal{U}}\chi_U+ \sum_{V\in \mathcal{V}}\chi_V.
\end{equation}

Notice that $\chi_I+\chi_J$ is larger than  $\sum_{U\in \mathcal{U}}\chi_U+ \sum_{V\in \mathcal{V}}\chi_V$ with respect to $\preceq$, since for all $U\in \mathcal{U}$, $\lvert U \rvert <\min\{\lvert I\rvert,\lvert J\rvert\}$.

If $f\in \rM(P)$, with $(P,\le)$ a partially ordered set, then by Proposition~\ref{prop:irred-pos}, $f$ admits an expression of the form $f=\sum_{I\in\cI} a_I \chi_I$ for some set of irreducible upper sets $\cI$ and some positive integers $a_I$. If $\cI$ is not a near-chain, then there is some $I,J\in\cI$ such that $I\cap J$ is not empty and neither $I\subseteq J$ nor $J\subseteq I$. We can replace $\chi_I+\chi_J$ in the expression $\sum_{I\in\cI} a_I \chi_I$ with $\sum_{U\in \mathcal{U}}\chi_U+ \sum_{V\in \mathcal{V}}\chi_V$. With the new expression we repeat the process. Every time we apply a substitution we are replacing a sum of two irreducibles by another sum with smaller order. Thus, after a finite number of steps, this process will stop, obtaining the canonical expression of $f$ given in Proposition~\ref{prop:exp-nc}, which is the normal form with respect to $\preceq$. 

This in particular shows that the set of pairs  $(I+J, \sum_{U\in \mathcal{U}}U+ \sum_{V\in\mathcal{V}}V)$ corresponding to \eqref{eq:no-nc} solves the word problem in $\mathcal{F}$: in order to decide if two expressions $\sum_{I\in\cI} a_I\chi_I$ and $\sum_{J\in\cJ} a_J\chi_J$ represent the same element in $\rM(P)$ (they map to the same element via $\varphi$), we only have to compute their canonical expressions and see if they coincide.  Thus we have shown the following theorem.

\begin{theorem}\label{th:canonical-sg}
Let $(P,\le)$ be a finite partially ordered set. Let $\mathcal{F}$ be the free monoid on the irreducible upper sets of $P$. Then the morphism $\varphi$ from $\mathcal{F}$ to $\rM(P)$ induced by $I\mapsto \chi_I$ is an epimorphism. Moreover, the set of pairs 
$$
\left(I+J, \sum_{U\in \mathcal{U}}U+ \sum_{V\in\mathcal{V}}V\right),
$$ 
for every $I$ and $J$ with $\{I,J\}$ not being a near-chain, where $I\cap J$ and $I\cup J$ decompose as a disjoint union of irreducibles as $I\cap J=\dot\bigcup_{U\in\mathcal U} U$ and $I\cup J=\dot\bigcup_{V\in\mathcal V} V$, is a canonical basis for the kernel congruence of $\varphi$ for the order $\preceq$.
\end{theorem}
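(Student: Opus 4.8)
The plan is to recognize that almost everything needed has already been assembled in the discussion preceding the statement, and to organize it into three steps: $\varphi$ is surjective, the displayed pairs are genuine relations, and the rewriting system they determine is convergent with normal forms the canonical near-chain expressions. Surjectivity of $\varphi$ is exactly the generation statement of Proposition~\ref{prop:irred-pos}. That each displayed pair lies in $\ker\varphi$ is identity~\eqref{eq:no-nc}: applying $\varphi$ to $I+J$ gives $\chi_I+\chi_J=\chi_{I\cap J}+\chi_{I\cup J}=\sum_{U\in\mathcal U}\chi_U+\sum_{V\in\mathcal V}\chi_V$, using the indicator identity and the canonical decompositions of $I\cap J$ and $I\cup J$ from Lemma~\ref{lem:exp-union-irr}. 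So the pairs are relations; what remains is to show they form a canonical basis for $\preceq$, i.e.\ that they generate $\ker\varphi$ and solve the word problem.

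Next I would orient each pair so that its $\preceq$-larger member $I+J$ is the left-hand side; this is legitimate since, as remarked in the text, $\chi_I+\chi_J\succ\sum_{U}\chi_U+\sum_{V}\chi_V$. Termination of the resulting rewriting rests on two observations. First, the quantity $\sum_{I\in\cI}a_I\lvert I\rvert$ is invariant under a rewrite step, because $\lvert I\rvert+\lvert J\rvert=\lvert I\cap J\rvert+\lvert I\cup J\rvert=\sum_{U}\lvert U\rvert+\sum_{V}\lvert V\rvert$. Second, within a fixed value of this quantity there are only finitely many formal expressions over the (finite) set of irreducible upper sets of $P$, on which $\preceq$ restricts to a total order, and every rewrite step is strictly $\preceq$-decreasing. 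Hence every sequence of rewrites is finite.

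Then I would pin down the normal forms. An expression $\sum_{I\in\cI}a_I\chi_I$ admits a rewrite step exactly when $\cI$ is not a near-chain: if it is not, some pair $I,J\in\cI$ violates the near-chain condition, so the corresponding rule applies; if $\cI$ is a near-chain, then every two of its members form a near-chain, so no rule has its left-hand side available. Thus the normal forms are precisely the near-chain expressions, and by Proposition~\ref{prop:exp-nc} each $f\in\rM(P)$ has exactly one of these. Consequently the terminal form of any word representing $f$ is this unique near-chain expression, so the system is confluent, and two words of $\mathcal F$ have the same image under $\varphi$ if and only if they reduce to the same normal form — which is exactly the word problem being solved for $\preceq$. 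The same fact shows the pairs generate $\ker\varphi$: if $\varphi(x)=\varphi(y)$ then $x$ and $y$ reduce to a common normal form, hence lie in the congruence generated by the pairs. Together these yield the theorem.

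The main obstacle — really the only point beyond bookkeeping — is making the termination and normal-form analysis airtight: confirming that near-chain expressions are genuinely irreducible under the rules (which uses that "near-chain" is closed under the property that every pair of members forms a near-chain) and that the degree invariance $\lvert I\rvert+\lvert J\rvert=\lvert I\cap J\rvert+\lvert I\cup J\rvert$ really pins the order down to a finite set at each step. Along the way one should check that a rewrite step produces another legitimate formal expression with positive exponents (absorbing any repeated $U$'s or $V$'s into the existing coefficients) and that the decompositions $I\cap J=\dot\bigcup_{U}U$ and $I\cup J=\dot\bigcup_{V}V$ are canonical by Lemma~\ref{lem:exp-union-irr}, so that the rules are well defined.
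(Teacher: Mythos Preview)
Your proposal is correct and follows essentially the same approach as the paper: the proof there is precisely the discussion preceding the theorem, showing that the rewriting rules \eqref{eq:no-nc} are $\preceq$-decreasing and terminate in the unique near-chain normal form of Proposition~\ref{prop:exp-nc}. You have organized the argument more explicitly---in particular, your termination argument (degree invariance plus finiteness of expressions at fixed degree) and your separate treatment of confluence and generation of $\ker\varphi$ make precise what the paper leaves implicit---but the substance is the same.
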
 

Let $K$ be a field, and let $t$ be a symbol. The \emph{monoid ring} of $\rM(P)$ is defined as $K[\rM(P)]=\bigoplus_{f\in \rM(P)} K t^f$, where addition is performed component-wise and multiplication is determined by the rule $t^ft^g=t^{f+g}$ and the distributive law. For every irreducible upper set $I$, take a variable $x_I$, and let $R$ be the polynomial ring on these variables with coefficients in $K$. Then we can define the ring homomorphism determined by the images of $x_I$ for all $I$
$$
\psi: R\to K[\rM(P)],\ x_I\mapsto t^{\chi_I}.
$$
The kernel of $\psi$ is known as the \emph{ideal associated} to $K[\rM(P)]$, denoted $\rI_{\rM(P)}$. By Herzog's correspondence, \cite{herzog},
$$
\rI_{\rM(P)}=\left\{x_{I_1}^{a_1}\cdots x_{I_n}^{a_n}- x_{J_1}^{b_1}\cdots x_{J_m}^{b_m} \mid (a_1\chi_{I_1}+ \dots + a_n\chi_{I_n}, b_1\chi_{J_1}+ \dots + b_m\chi_{J_m} ) \in \ker\varphi \right\}.
$$

We can define the degree of $x_I$ as $\lvert I\rvert$, and if two variables have the same degree, we can arrange them as we arranged $\chi_I$ above. Then $\preceq$ translates to a monomial ordering on $R$, and the paragraphs preceding Theorem~\ref{th:canonical-sg} prove the following result.

\begin{theorem}\label{th:groebner-im}
Let $(P,\le)$ be a finite partially ordered set. Let $B$ be the set of binomials 
$$
x_Ix_J-\prod_{U\in \mathcal U} x_U\prod_{V\in \mathcal  V}x_V,
$$ 
such that $\{I,J\}$ is not a near-chain of $(P,\le)$, and $\mathcal U$ and $\mathcal V$ are partitions of irreducible upper sets of $I\cap J$ and $I\cup J$, respectively. Then $B$ is a Gr\"obner basis of the ideal $\rI_{\rM(P)}$ with respect to the order $\preceq$.
\end{theorem}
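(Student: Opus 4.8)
The plan is to read off the statement from Theorem~\ref{th:canonical-sg} through the standard dictionary between canonical bases of a monoid presentation and Gröbner bases of the associated binomial ideal. First I would record that $B\subseteq\rI_{\rM(P)}$: by Herzog's correspondence \cite{herzog}, the binomial $x_Ix_J-\prod_{U\in\mathcal U}x_U\prod_{V\in\mathcal V}x_V$ lies in $\rI_{\rM(P)}$ exactly because $\bigl(I+J,\ \sum_{U\in\mathcal U}U+\sum_{V\in\mathcal V}V\bigr)\in\ker\varphi$, both sides mapping to $\chi_I+\chi_J=\chi_{I\cap J}+\chi_{I\cup J}$. Next, the ordering $\preceq$ on $R$ was set up (see the discussion following \eqref{eq:no-nc}, using $\lvert U\rvert<\min\{\lvert I\rvert,\lvert J\rvert\}$ for all $U\in\mathcal U$) precisely so that $x_Ix_J$ is the leading term of each such binomial. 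Hence $\operatorname{in}_{\preceq}(B)$ is the monomial ideal generated by the squarefree quadratic monomials $x_Ix_J$ with $\{I,J\}$ not a near-chain.

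The second step is to describe the standard monomials, i.e.\ those not in $\operatorname{in}_{\preceq}(B)$. Since the near-chain property is a condition on pairs of irreducible upper sets, a monomial $\prod_{I\in\cI}x_I^{a_I}$ (with all $a_I\geq 1$) is divisible by no generator of $\operatorname{in}_{\preceq}(B)$ if and only if every two-element subset of $\cI$ is a near-chain, i.e.\ if and only if $\cI$ itself is a near-chain. So the standard monomials are $1$ together with the monomials $\prod_{I\in\cI}x_I^{a_I}$ indexed by near-chains $\cI$ and positive integers $a_I$.

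The crux is to show these standard monomials form a $K$-basis of $R/\rI_{\rM(P)}\cong K[\rM(P)]$. They span $R/\rI_{\rM(P)}$ since division with respect to $\preceq$ by the elements of $B$ terminates (each rewriting step strictly lowers $\preceq$), so they span $R/\langle B\rangle$, and $\langle B\rangle\subseteq\rI_{\rM(P)}$. For linear independence, observe that $\psi$ sends $\prod_{I\in\cI}x_I^{a_I}$ to $t^{\sum_{I\in\cI}a_I\chi_I}$; by Proposition~\ref{prop:exp-nc}, as $\cI$ runs over near-chains and the $a_I$ over positive integers, the exponents $\sum_{I\in\cI}a_I\chi_I$ run over all of $\rM(P)$ \emph{without repetition}. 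Since $\{t^f\mid f\in\rM(P)\}$ is a $K$-basis of $K[\rM(P)]$, the images of the standard monomials are $K$-linearly independent, hence so are the standard monomials modulo $\rI_{\rM(P)}$. Thus they are a $K$-basis of $R/\rI_{\rM(P)}$.

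Finally I would invoke the elementary criterion: if $G\subseteq\rI_{\rM(P)}$ and the monomials outside $\operatorname{in}_{\preceq}(G)$ are linearly independent modulo $\rI_{\rM(P)}$, then $G$ is a Gröbner basis of $\rI_{\rM(P)}$. Indeed $\operatorname{in}_{\preceq}(G)\subseteq\operatorname{in}_{\preceq}(\rI_{\rM(P)})$ always, so the standard monomials of $\rI_{\rM(P)}$ (which form a $K$-basis of $R/\rI_{\rM(P)}$) are a subset of those of $G$ (which are linearly independent modulo $\rI_{\rM(P)}$); a linearly independent set containing a basis of the same space equals it, so $\operatorname{in}_{\preceq}(G)=\operatorname{in}_{\preceq}(\rI_{\rM(P)})$. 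Applying this with $G=B$ gives the statement. The only non-formal inputs are the termination-and-normal-form analysis already carried out before Theorem~\ref{th:canonical-sg} and the uniqueness half of Proposition~\ref{prop:exp-nc}; I do not expect a genuine obstacle, but in writing it up the point requiring care is exactly the passage from "canonical basis of $\ker\varphi$" to "the standard monomials form a vector-space basis of $R/\rI_{\rM(P)}$", i.e.\ making the counting bijection precise rather than treating the translation as automatic. (One could alternatively verify Buchberger's $S$-polynomial criterion directly on $B$, but that amounts to the same confluence analysis and is less transparent.)
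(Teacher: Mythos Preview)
Your proposal is correct and follows essentially the same approach as the paper. The paper's proof is just the remark that the paragraphs preceding Theorem~\ref{th:canonical-sg} (the rewriting/termination analysis) translate directly under the monoid-to-ring dictionary; immediately after the statement the paper also sketches exactly your ``alternative'' argument---the standard monomials are those supported on near-chains, and these are linearly independent modulo $\rI_{\rM(P)}$ by Proposition~\ref{prop:exp-nc}---which is precisely the route you write out in full.
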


Let $\mathcal M$ be the monomial ideal generated by the $x_Ix_J$ with $\{I,J\}$ not a near-chain. The complementary set of monomials in $R$ are exactly those whose support is a near-chain. Moreover these monomials are linearly independent (as a consequence of Proposition~\ref{prop:exp-nc}). This gives an alternative proof of Theorem~\ref{th:groebner-im}. In \cite[Theorem 1.2]{p-partitions} a system of generators of the ideal associated to the monoid ring of $P$-partitions is given.

The upper sets in $P$ form a distributive lattice $\mathcal{L}$ with respect to intersection and union. With such a lattice one can associate its Hibi ring $K[\mathcal{L}]$ \cite{Hibi} that as a $K$-algebra is defined by the generators $x_I$, $I\in\mathcal{L}$, and the relations
$$
x_Ix_J=x_{I\cap J}x_{I\cup J}.
$$
The Hibi ring is standard graded with $\deg x_I=1$ for all $I$ since all its defining relations are homogeneous of degree $2$.

\begin{theorem}
Let $(P,\le)$ be a finite partially ordered set. Then the following hold:
\begin{enumerate}[1.]	
\item $K[\rM(P)]$ is an integral domain of Krull dimension equal to $|P|$. 
\item $K[\rM(P)]$ is the dehomogenization of $K[\mathcal{L}]$ with respect to the degree $1$ element $x_\emptyset$, that is, $K[\rM(P)]\cong K[\mathcal{L}]/(x_\emptyset-1)$.
\end{enumerate}
\end{theorem}

\begin{proof}
The first statement follows from the general fact that the $K$-algebra $K[M]$ for an arbitrary affine monodid $M$ is an integral domain of Krull dimension equal to $\operatorname{rank} M$. That $\operatorname{rank}\rM(P) =|P|$ has been stated in Proposition \ref{prop:irred-pos}.

For the second statement we observe that in $K[\rM(P)]$ we have $t^{\chi_I}t^{\chi_J}=t^{\chi_{I\cap J}}t^{\chi_{I\cup J}}$, and $t^{\chi_\emptyset}=1$. The substitution $x_I\mapsto t^{\chi_I}$ therefore induces  a surjective algebra homomorphism $K[\mathcal L]\to K[\rM(P)]$ whose kernel contains $x_\emptyset-1$. Some immediate observations: (i) $x_\emptyset-1\neq 0$ since it is the difference of elements of different degrees; (2) it is even a nonzerodivisor since every zerodivisor in the graded ring $K[\mathcal{L}]$ is annihilated by a nonzero homogeneous element, and such an element would have to annihilate $1$ (see \cite[1.5.6]{BH}).

The main point:  $x_\emptyset-1$ generates a prime ideal of height $1$ in the integral domain $K[\mathcal L]$ by the general properties of dehomogenization (for example, see \cite[p. 38]{BH}), and since the Krull dimensions of $K[\mathcal L]$ and $K[\rM(P)]$ differ by $1$, one has the isomorphism $K[\rM(P)]\cong K[\mathcal{L}]/(x_\emptyset-1)$. In fact, in any Noetherian ring of finite Krull dimension and for every prime ideal $Q$ of $R$ one has $\dim R \ge \dim R/Q + \operatorname{height} Q$. This excludes the possibility that the kernel of the homomorphism $K[\mathcal L]\to K[\rM(P)]$ properly contains $(x_\emptyset-1)$.  
\end{proof}

\subsection{Convex hull}

Let $(P,\le)$ be a finite partially ordered set. We say that $y\in P$ is a \emph{cover} of $x\in P$ if $x<y$, but there is no $z\in P$ such that $x<z<y$.	Let $\rC(P)$ be the set of monotone functions $f:P \to \mathbb{Q}$.

Observe that $\rM(P)\subset \rC(P)$, and that for every $q\in \rC(P)$, there exists a positive integer $n$ such that $nq\in \rM(P)$. Thus $\rC(P)$ is the convex closure of $\rM(P)$ in $\mathbb{Q}^{|P|}$. 

We can identify $\rM(P)$ with the set of vectors $(y_p)_{p\in P}\in \NN^{\lvert P\rvert}$ such that $y_p\le y_q$ whenever $p\le q$. Thus $\rM(P)$ can be seen as a normal submonoid of $\NN^{\lvert P\rvert}$, and $\rC(P)$ corresponds to the cone spanned by it in $\QQ^{\lvert P\rvert}$. We now list the extremal rays and the support hyperplanes of $\rC(P)$, a fact that implicitly appears in \cite[Section 7]{l-e}, but we include here with our notation for sake of completeness. 

\begin{theorem} \label{theo::supp}
The set of extremal rays of $\rC(P)$ is 
$$
\bigl\{\chi_I\mid I\hbox{  irreducible upper set of }P\bigr\}.
$$
Moreover, the cone $\rC(P)$ is cut out from the space of all functions $f:P\to\QQ$ by the inequalities
\begin{align}
f(x)&\ge 0,\qquad &&x\in P, \text{ $x$ minimal in $P$}	,\label{equ:pos}\\
f(x)&\le f(y),\qquad &&x,y\in P, \text{ $y $ is a cover of $x$},\label{equ:cov}
\end{align}
and this description is minimal.
\end{theorem}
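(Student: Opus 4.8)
The plan is to establish the theorem in three parts: first identifying the extremal rays, then verifying that the listed inequalities are valid on $\rC(P)$ and cut it out exactly, and finally checking minimality of that description.

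For the extremal rays, I would argue that every $\chi_I$ with $I$ an irreducible upper set spans an extremal ray of $\rC(P)$, and conversely. One direction: since $\rC(P)$ is the cone over $\rM(P)$ and the $\chi_I$ generate $\rM(P)$ (Proposition~\ref{prop:irred-pos}), the extremal rays of $\rC(P)$ are spanned by a subset of the $\{\chi_I\}$. To see that no $\chi_I$ is redundant, suppose $\chi_I = \sum_j \lambda_j \chi_{I_j}$ with $\lambda_j > 0$ and $I_j \neq I$. Looking at supports, $\supp(\chi_I) = I = \bigcup_j I_j$, so each $I_j \subseteq I$; if some $I_j \subsetneq I$ then, because $I$ is irreducible, the $I_j$ cannot partition $I$ and some point of $I$ is covered with total weight exceeding, or some relation among the $0/1$ patterns fails — more cleanly, one uses the canonical near-chain expression of Proposition~\ref{prop:exp-nc}: the vector $\chi_I$ already is its own canonical form, and a nontrivial conical combination that equals it would, after clearing denominators, contradict uniqueness. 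So every $\chi_I$ is extremal.

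For the inequality description, validity is immediate: any monotone $f$ satisfies $f(x) \le f(y)$ whenever $x < y$, in particular when $y$ covers $x$, and $f(x) \ge 0$ everywhere, in particular at minimal elements. Conversely, if $f:P\to\QQ$ satisfies \eqref{equ:pos} and \eqref{equ:cov}, then by transitivity along cover chains $f(x)\le f(y)$ for all $x\le y$, and since every element lies above some minimal element, $f(x)\ge 0$ for all $x$; hence $f\in\rC(P)$. This shows the inequalities \eqref{equ:pos} and \eqref{equ:cov} define exactly $\rC(P)$.

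The main work is minimality, i.e. that each listed inequality is facet-defining (irredundant). I expect this to be the principal obstacle. The strategy is: for each inequality, exhibit $|P|$ linearly independent elements of $\rC(P)$ on which it is tight, equivalently an extremal ray generator, or more directly a point in the relative interior of the corresponding face whose face has dimension $|P|-1$. For a cover relation $y$ of $x$, consider the set $U = \{\,p : p \not\le x\,\}$... more carefully, take the upper set generated by all elements except those $\le x$ together with a careful adjustment so that exactly the constraint $f(x)=f(y)$ is active; one natural candidate is to use characteristic functions $\chi_U$ where $U$ ranges over upper sets containing neither or both of $x,y$, showing the hyperplane $f(x)=f(y)$ contains $|P|-1$ independent such vectors. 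For a minimal element $x$, the hyperplane $f(x)=0$ is spanned by the $\chi_I$ with $x\notin I$, and one checks there are $|P|-1$ independent ones by producing an ascending chain of upper sets avoiding $x$ of length $|P|-1$ (refine the order so $x$ is listed first). Irredundancy of $f(x)\ge 0$ at minimal $x$: if it were implied by the others, every $f$ with $f(y)\ge f(x)$ for covers and $f\ge 0$ at other minimal elements would have $f(x)\ge 0$, which fails by taking $f\equiv -1$ on the connected component of $x$ built only from elements that do not force positivity; since $x$ is minimal and we only need the cover inequalities satisfied, setting $f$ very negative at $x$ and propagating upward keeps monotonicity of the covers while violating $f(x)\ge 0$. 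Assembling these rank computations — each amounting to finding a maximal chain of upper sets inside the appropriate face — is the routine but delicate part, and I would organize it by the chain-refinement trick already used in the proof of Proposition~\ref{prop:irred-pos}.
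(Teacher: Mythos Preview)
Your argument for the extremal rays has a genuine gap. You write that ``a nontrivial conical combination that equals $\chi_I$ would, after clearing denominators, contradict uniqueness'' of the near-chain expression. But uniqueness in Proposition~\ref{prop:exp-nc} only asserts that the \emph{near-chain} expression is unique; the right-hand side $\sum m_j\chi_{I_j}$ need not be a near-chain, and when you reduce it to one via the relations $\chi_G+\chi_H=\chi_{G\cap H}+\chi_{G\cup H}$, the set $I$ can and will appear (take $G,H\subsetneq I$ irreducible with $G\cup H=I$ and $G\cap H\neq\emptyset$). So nothing is contradicted. The paper instead argues directly: assuming $\chi_I=\sum a_i\chi_{I_i}$ with some $I_j\neq I$, it picks $k\in I\setminus I_j$, separates the $I_i$ into those containing $k$ and those not, uses the value at $k$ to get $\sum_{k\in I_i}a_i=1$, and then shows that $J=\bigcap_{k\in I_i}I_i$ and $\bigcup_{k\notin I_i}I_i$ give a nontrivial disjoint decomposition of $I$ into upper sets, contradicting irreducibility. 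You need an argument of this kind.

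For minimality you are working much harder than necessary. Rather than exhibiting $|P|-1$ independent vectors on each supporting hyperplane, the paper simply produces, for each inequality, a single function satisfying all the \emph{other} inequalities but violating this one. For a cover $y$ of $x$ it takes $f(z)=1$ for $z\in\uar\!\{x\}\setminus\{y\}$ and $f(z)=0$ otherwise; this is not monotone (since $f(x)=1>0=f(y)$) but satisfies every other listed inequality, so the inequality $f(x)\le f(y)$ cannot be dropped. This is a two-line check, whereas your facet-dimension computation, as you acknowledge, is ``routine but delicate'' and your sketch does not actually carry it out.
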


\begin{proof}
Since the set $\{\chi_I\mid I\hbox{  irreducible upper set of }P\}$ generates $\rM(P)$, its $\QQ^+$-linear span is $\rC(P)$. We must show that an equation
$$
\chi_I=\sum_{i=1}^{r} a_i \chi_{I_i}, \qquad a_i>0 \text{ for all } i,
$$
with $I,I_1,\dots,I_r$ irreducible is only possible with $I_j=I$ for all $j$. (Clearly $I_j\subset I$ for all $j$.)

Assume the contrary. Then there is $k\in I$ such that $k\notin I_j$ for some $j$. We can assume $k\in I_1,\dots,I_t$, $k\notin I_{t+1},\dots, I_r$. By looking at the value of $\chi_I$ in $k$, we see that
$$
\sum_{i=1}^{t}a_i=1. 
$$
Let $J=\bigcap_{i=1}^t I_i$. Then at each element $j$ in $J$ the sum  $\sum_{i=1}^t a_i\chi_{I_i}(j)=1$. But this implies that $J\cap (I_{t+1}\cup\dots\cup I_r)=\emptyset$. In fact, if $h\in J\cap (I_{t+1}\cup\dots\cup I_r)$, then $\chi_I(h)=\sum_{i=1}^r a_i\chi_{I_i}(h)>1$, a contradiction.

We see now that $I=J\cup (I_{t+1}\cup\dots\cup I_r)$. Namely, if $h\in I$, $h\notin J$, then not all $a_j\chi_{I_j}$, $j\in \{1,\dots,t\}$, contribute to the value of $\chi_I$ in $h$. So at least one of the $\chi_{I_u}$ with $t+1\le u\le r$ must be equal to $1$ in $h$, and so $h\in I_u$.

This contradicts the irreducibility of $I$, since $J$ and $I_{t+1}\cup\dots\cup I_r$ are non\-empty upper sets, so the first statement is proved.	

As for the support hyperplanes, it is evident that exactly the monotone functions with nonnegative values satisfy the set of inequalities \eqref{equ:pos} and \eqref{equ:cov}, and that one cannot omit any of the inequalities in \eqref{equ:pos}. Only minimality of the inequalities in \eqref{equ:cov} could be an issue. To this end, let us fix $x$ and $y$ such that $y$ is a cover of $x$, and define the function $f:P\to \QQ$ by $f(z)=1$ for all $z\in \UAR \{x\}$, $z\neq y$, and $f(z)=0$ elsewhere. Then $f$ is not monotone, but satisfies all inequalities except $f(x)\le f(y)$.	
\end{proof}

For a near-chain $\Gamma=\{I_1,\dots,I_m\}$ of irreducible upper sets  we take
$$
\sigma_\Gamma=\rL_{\QQ_+}(\{\chi_{I_1},\dots,\chi_{I_m}  \}),
$$
the set of all $\QQ$-linear combinations with nonnegative coefficients. We have already seen that $\chi_{I_1},\dots,\chi_{I_m}$ are linearly independent (as a consequence of Proposition~\ref{prop:exp-nc}). Therefore $\sigma_\Gamma$ spans a simplicial cone.

\begin{corollary}
Let $(P,\le)$ be a finite partially ordered set. Then the collection $(\sigma_\Gamma)$, $\Gamma$ a near-chain of irreducible upper sets, is a unimodular triangulation of $\rC(P)$. 
\end{corollary}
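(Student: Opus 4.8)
The plan is to verify directly the three defining properties of a unimodular triangulation of the full-dimensional rational cone $\rC(P)$: that the simplicial cones $\sigma_\Gamma$, with $\Gamma$ ranging over near-chains of irreducible upper sets, cover $\rC(P)$; that any two of them meet in a common face, so that the collection is a fan; and that each $\sigma_\Gamma$ is unimodular. Closure under passing to faces is automatic, since every face of the simplicial cone $\sigma_\Gamma=\rL_{\QQ_+}(\{\chi_I : I\in\Gamma\})$ is $\sigma_{\Gamma'}$ for a subset $\Gamma'\subseteq\Gamma$, and any subset of a near-chain is again a near-chain. (Morally this is the canonical triangulation of the order cone, but it is cleanest to extract it from Proposition~\ref{prop:exp-nc}.) The linchpin of the argument is a rational refinement of that proposition.

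First I would upgrade Proposition~\ref{prop:exp-nc} to arbitrary $q\in\rC(P)$: \emph{every $q\in\rC(P)$ has a unique expression $q=\sum_{I\in\cI}a_I\chi_I$ with $\cI$ a near-chain and the $a_I$ positive rationals}. For existence, pick $n\in\NN$ with $nq\in\rM(P)$, apply Proposition~\ref{prop:exp-nc} to $nq$, and divide by $n$. For uniqueness, clear all denominators by multiplying two such expressions by a common $n$; both become near-chain representations of $nq\in\rM(P)$ with positive integer coefficients, so Proposition~\ref{prop:exp-nc} forces them to agree. Write $\cI_q$ for the near-chain attached to $q$. Existence gives the covering property at once, since $q\in\sigma_{\cI_q}$. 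The crucial consequence of uniqueness is the equivalence $q\in\sigma_\Gamma\iff\cI_q\subseteq\Gamma$: if $q=\sum_{I\in\Gamma}s_I\chi_I$ with $s_I\ge 0$, then $\{I\in\Gamma:s_I>0\}$ is a sub-near-chain carrying a positive representation of $q$, hence equals $\cI_q$ by uniqueness, while the converse is trivial. Therefore $\sigma_\Gamma\cap\sigma_{\Gamma'}=\{q:\cI_q\subseteq\Gamma\cap\Gamma'\}=\sigma_{\Gamma\cap\Gamma'}$, and $\Gamma\cap\Gamma'$ is a sub-near-chain of each of $\Gamma,\Gamma'$, so this is a common face. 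This is the step I expect to carry the most weight, but as indicated it reduces entirely to the sharpened uniqueness.

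Finally I would prove unimodularity of $\sigma_\Gamma$ for a near-chain $\Gamma=\{I_1,\dots,I_m\}$. As the $\chi_{I_i}$ are $0/1$-vectors they are primitive, and we already know they are linearly independent, so it suffices to show $\ZZ\chi_{I_1}+\dots+\ZZ\chi_{I_m}$ is saturated in $\ZZ^{|P|}$, i.e.\ that any $x\in\ZZ^{|P|}$ with $x=\sum_i\lambda_i\chi_{I_i}$, $\lambda_i\in\QQ$, has all $\lambda_i\in\ZZ$. Ordered by inclusion, $\Gamma$ is a forest in which siblings and the maximal elements are pairwise disjoint and each member is contained in its parent. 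For $I_i\in\Gamma$ set $D_i=I_i\setminus\bigcup\{I_j\in\Gamma:I_j\subsetneq I_i\}$, the part of $I_i$ not covered by its children in $\Gamma$. Then $D_i\neq\emptyset$: a leaf has $D_i=I_i\neq\emptyset$; a node with a single child $C$ has $D_i=I_i\setminus C\neq\emptyset$; and if two or more children covered $I_i$, then $I_i$ would be a disjoint union of two nonempty upper sets, contradicting irreducibility. For $p\in D_i$, the near-chain property gives $\chi_{I_j}(p)=1$ exactly when $I_j\supseteq I_i$ and $\chi_{I_j}(p)=0$ otherwise, so $x(p)$ is the sum of the $\lambda_j$ over the chain of ancestors-or-self of $I_i$. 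Evaluating at such $p$ and inducting down the forest from the roots then forces every $\lambda_i\in\ZZ$, which yields saturation; hence the $\chi_{I_i}$ extend to a $\ZZ$-basis of $\ZZ^{|P|}$ and $\sigma_\Gamma$ is unimodular. Combining the three parts shows that $(\sigma_\Gamma)_\Gamma$ is a unimodular triangulation of $\rC(P)$.
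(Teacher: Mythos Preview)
Your proof is correct. The paper's argument is a one-line appeal to the Sturmfels correspondence: since the binomials in Theorem~\ref{th:groebner-im} form a squarefree Gr\"obner basis of the toric ideal, the initial complex (whose faces are exactly the near-chains) gives a regular unimodular triangulation of $\rC(P)$, and the paper then remarks that it is also an immediate consequence of Proposition~\ref{prop:exp-nc}. You essentially carry out that second route in full, replacing the black-box invocation of \cite[Corollary~7.20]{BrGu} by a direct verification of covering, compatibility, and unimodularity. What you gain is a self-contained, elementary argument; in particular your saturation check, exploiting the forest structure of a near-chain and the nonempty ``private'' pieces $D_i$ guaranteed by irreducibility, makes the unimodularity explicit rather than hidden inside the squarefreeness of the initial ideal. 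What the paper's approach buys is brevity and the extra information (regularity of the triangulation, relation to the initial ideal) that comes for free from the Sturmfels machinery.
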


This follows from Theorem~\ref{th:groebner-im} by the Sturmfels correspondence (\cite[Corollary 7.20]{BrGu}). It is also an immediate consequence of Proposition~\ref{prop:exp-nc}.

In the terminology of toric algebra, Proposition~\ref{prop:irred-pos} says that the functions $\chi_I$, for irreducible upper sets $I$, form the Hilbert basis of $\rC(P)$.

\subsection{Cohen-Macaulay type}

One says that $P$ is \emph{graded} if there exists a \emph{level function} $\gamma:P\to \ZZ$ such that (i) $\gamma(x)=1$ if $x$ is a minimal element of $P$ and (ii) $\gamma(y)=\gamma(x)+1$ whenever $y$ is a cover of $x$. (It is more customary to assume that $\gamma(x)=0$ for minimal $x$, but the two cases are equivalent since we can add a constant without changing condition (ii).)  Evidently $\gamma$ is uniquely determined. An equivalent condition is that all maximal chains connecting an element $y$ and any minimal element $x\le y$ have the same length.

\begin{theorem}\label{theo:Gor}
Let $(P,\le)$ be a finite partially ordered set. Then the following hold:
\begin{enumerate}[1.]
\item the ring $K[\rM(P)]$ is a normal Cohen-Macaulay domain;
\item \label{item:gorenstein} it is Gorenstein if and only if $P$ is graded;
\item under the equivalent conditions of~\ref{item:gorenstein}, the generator of the canonical module is the level function.
\end{enumerate}
\end{theorem}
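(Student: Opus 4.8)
The plan is to derive everything from the fact, recorded above, that $\rM(P)$ is a normal affine monoid: it is finitely generated (Proposition~\ref{prop:irred-pos}), cancellative and torsion free (it embeds into $\ZZ^{|P|}$), and it is exactly $\rC(P)\cap\ZZ^{|P|}$, the monoid of lattice points of the rational cone $\rC(P)$; moreover its group is all of $\ZZ^{|P|}$ (for instance $e_p=\chi_{\{q\ge p\}}-\chi_{\{q>p\}}$). For (1) I would then simply invoke Hochster's theorem: the monoid ring of a normal affine monoid over a field is a normal Cohen--Macaulay ring (see \cite[Theorem~6.3.5]{BH}, \cite[Chapter~6]{BrGu}). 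That $K[\rM(P)]$ is a domain is immediate, since $\rM(P)$ embeds into $\ZZ^{|P|}$ and hence $K[\rM(P)]$ embeds into the Laurent polynomial ring $K[\ZZ^{|P|}]$.

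For (2) and (3) the plan is to run the standard Gorenstein criterion for normal affine monoid rings. First I would record the facet support forms of $\rC(P)$. Since $\rC(P)$ is pointed and full-dimensional in $\QQ^{|P|}$ (its rank is $|P|$ by Proposition~\ref{prop:irred-pos}), Theorem~\ref{theo::supp}, \emph{including} its minimality clause, tells us that the complete, irredundant list of facet support forms, written as $\sigma\ge 0$, is
$$
\sigma_x\colon f\mapsto f(x)\ \ (x\ \text{minimal in }P),\qquad
\sigma_{x,y}\colon f\mapsto f(y)-f(x)\ \ (y\ \text{a cover of }x),
$$
and each of these is a primitive integral linear form on $\ZZ^{|P|}$ (coefficient vector $e_x$, resp.\ $e_y-e_x$). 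Next I would quote the Danilov--Stanley description of the canonical module: $\omega_{K[\rM(P)]}$ is the ideal $\bigoplus K t^{g}$ with $g$ ranging over the lattice points of $\rM(P)$ in the relative interior of $\rC(P)$, that is, over the monotone $g\colon P\to\NN$ with $\sigma(g)\ge 1$ for every facet form $\sigma$ above (\cite[Theorem~6.3.5]{BH}, \cite[Chapter~6]{BrGu}).

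The last step is the heart of the matter. By the same circle of results, $K[\rM(P)]$ is Gorenstein --- equivalently, $\omega$ is a principal ideal --- if and only if there is a lattice point $\gamma\in\ZZ^{|P|}$ with $\sigma(\gamma)=1$ for \emph{every} facet support form $\sigma$, and in that case $\omega=t^{\gamma}K[\rM(P)]$, so $\gamma$ generates the canonical module. Unwinding $\sigma(\gamma)=1$ for the forms listed above, this says precisely that $\gamma(x)=1$ for every minimal $x$ and $\gamma(y)=\gamma(x)+1$ whenever $y$ covers $x$; i.e.\ $\gamma$ is a level function of $P$. Such a $\gamma$ exists exactly when $P$ is graded, and then it is unique and automatically lies in $\rM(P)$, being monotone and positive at the minimal elements. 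This gives both (2) and (3).

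I do not expect a serious obstacle along this route; the points needing care are (i) checking that the ``minimal'' description in Theorem~\ref{theo::supp} really is the primitive facet presentation of $\rC(P)$, so that the Gorenstein criterion applies verbatim, and (ii) matching the normalization of the canonical module in the quoted theorem with the statement about the level function. If one preferred a self-contained proof avoiding the general criterion, one would instead show directly that when $P$ is graded the set of interior lattice points of $\rC(P)$ equals $\gamma+\rM(P)$ --- a short monotonicity computation --- and, when $P$ is not graded, exhibit an interior lattice point $f$ with $f-e\notin\rM(P)$, where $e$ is the unique componentwise-minimal interior lattice point (the longest-chain height function plus the all-ones function); producing such an $f$ is the only delicate part, which is exactly what the general criterion packages away.
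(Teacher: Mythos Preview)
Your proposal is correct and follows essentially the same route as the paper: normality of $\rM(P)$ plus Hochster's theorem for (1), the facet description of $\rC(P)$ from Theorem~\ref{theo::supp} together with the standard Gorenstein criterion for normal affine monoids (the paper cites \cite[Theorem~6.33]{BrGu}) for (2), and Danilov--Stanley for (3). Your write-up is in fact more detailed in checking the hypotheses (fullness of the group, primitivity of the facet forms) than the paper's own proof.
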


\begin{proof}
Already by its definition, the monoid $\rM(P)$ is the set of lattice points in a rational cone. Therefore it is normal, and the algebra $K[\rM(P)]$ is Cohen-Macaulay by Hochster's Theorem \cite{hochster}.

The Gorenstein property of $K[\rM(P)]$ is equivalent to the existence of a function $\gamma:P\to \ZZ$ such that all linear forms in the inequalities \eqref{equ:pos} and \eqref{equ:cov} that represent the support hyperplanes of $\rC(P)$ have value $1$ on $\gamma$ (see \cite[Theorem 6.33]{BrGu}), that is, $\gamma(x)=1$ for every $x$ minimal in $P$, and $\gamma(y)-\gamma(x)=1$ if $y$ is a cover of $x$. But this is exactly the condition that $\gamma$ is a level function on $P$.

The last statement follows from the theorem of Danilov and Stanley that we briefly discuss  for the general case below.
\end{proof}

Since the Gorenstein property depends only on $\rM(P)$ (and not on $K$) we say that $\rM(P)$ is \emph{Gorenstein} if $K[\rM(P)]$ is a Gorenstein ring. The same convention can be used for the type that we discuss now. The \emph{Cohen-Macaulay type} of a normal monoid is the cardinality of the set of minimal generators of the interior of the cone defining it. In fact, the type is the minimal number of generators of the canonical module \cite[Prop. 3.3.11]{BH}, and by a theorem of Danilov and Stanley the canonical module is in our case the ideal generated by the monomials that correspond to lattice points in the interior of the cone \cite[Theorem 6.31]{BrGu}. The interior of the cone is an ideal of the monoid, and it is determined by turning the defining inequalities of the monoid into strict inequalities. So the type is the cardinality of the set $\operatorname{Minimals}_{\le_{\rM(P)}}(\operatorname{int}(\rC(P))\cap\rM(P))$ for which the order $\le_{\rM(P)}$ is defined as follows: $ f \le_{\rM(P)} g$ if $g-f\in \rM(P)$.

\begin{example}\label{ex:type}
Consider the poset $P$ with Hasse diagram
\begin{center}
\begin{tikzpicture}[->,>=stealth',auto,node distance=1.5cm, main node/.style={circle,draw}]

\node[main node] (1) {$a$};
\node[main node] (2) [above of=1] {$b$};
\node[main node] (3) [above of=2] {$c$};
\node[main node] (4) [right of=3] {$d$};
\node[main node] (5) [above of=3] {$e$};

\path[every node/.style={font=\sffamily\small}]
(1) edge node  {} (2)
(2) edge node  {} (3)
(3) edge node  {} (5)
;

\path[every node/.style={font=\sffamily\small}]
(4) edge node  {} (5);

\end{tikzpicture}
\end{center}
Then $\rM(P)$ is generated by 
$$
\{\chi_{e}, \chi_{de},\chi_{ce},\chi_{cde},\chi_{bce},\chi_{bcde},\chi_{abce},\chi_{abcde}\}.
$$
While the interior is 
$$
\{g_1,g_2,g_3\}+\rM(P)
$$
where $g_1\equiv ( 1, 2, 3, 1, 4 )$, $g_2\equiv ( 1, 2, 3, 2, 4)$, and $g_3\equiv ( 1, 2, 3, 3, 4)$. So the Cohen-Ma\-caulay type of $\rM(P)$ is $3$.
\end{example}

For the application in Theorem \ref{prop:type} we describe the minimal set of generators of $\operatorname{int}(\rC(P))\cap\rM(P)$  for a special type of poset that generalizes Example \ref{ex:type}. It uses the \emph{length} of a chain in a poset, which for us is the number of elements in  the chain.

\begin{proposition}\label{rem:type}
Let $P$ be a poset satisfying the following conditions:
\begin{enumerate}[1.]
\item for all $x\in P$, all inextensible chains ascending from  $x$ have the same length;
\item for all maximal elements $x\in P$ the maximal length of a chain descending from $x$ is independent  of $x$.
\end{enumerate}
Then a strictly monotone function $\gamma: P\to \NN\setminus\{0\}$ is minimal in $\operatorname{int}(\rC(P))\cap\rM(P)$ if and only if 
$\gamma(x_i)=i $ for all chains $x_1<\dots<x_m$ of maximal length.		
\end{proposition}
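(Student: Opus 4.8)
The plan is to reduce the assertion to a simple criterion about upper sets and then prove the two implications separately. By Theorem~\ref{theo::supp} the set $\operatorname{int}(\rC(P))\cap\rM(P)$ is exactly the set of strictly monotone functions $\gamma\colon P\to\NN\setminus\{0\}$ (the interior of $\rC(P)$ is the locus where all the inequalities of Theorem~\ref{theo::supp} are strict; positivity on minimal elements then propagates upward and strictness across covers propagates to $<$). The observation driving everything is that for such a $\gamma$ and \emph{any} upper set $U$ of $P$ one has $\gamma-\chi_U\in\rM(P)$: monotonicity can only fail across a covering pair $x<y$ with $x\notin U\ni y$, and there $\gamma(x)<\gamma(y)$ already forces $\gamma(x)\le\gamma(y)-1$, while nonnegativity is clear since $\gamma\ge 1$ on $U$. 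Again by Theorem~\ref{theo::supp}, $\gamma-\chi_U$ lies in $\operatorname{int}(\rC(P))$ exactly when (a) $\gamma(x)\ge 2$ for every minimal $x\in U$, and (b) $\gamma(y)-\gamma(x)\ge 2$ for every covering pair $x<y$ with $x\notin U\ni y$. Since $\gamma-(\gamma-\chi_U)=\chi_U\in\rM(P)$ is nonzero when $U\ne\emptyset$, this gives the working criterion: $\gamma$ is minimal in $\operatorname{int}(\rC(P))\cap\rM(P)$ for $\le_{\rM(P)}$ if and only if no nonempty upper set $U$ of $P$ satisfies both (a) and (b). Let $N$ denote the maximal length of a chain of $P$; every chain of length $N$ consists of covering pairs.

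For the ``if'' direction I would argue directly. Assume $\gamma(x_i)=i$ on every chain $x_1<\dots<x_N$ of maximal length, and let $\delta\in\operatorname{int}(\rC(P))\cap\rM(P)$ with $\gamma-\delta\in\rM(P)$. On such a chain, strict monotonicity and $\delta\ge 1$ give $\delta(x_i)\ge i$, while $\gamma-\delta\ge 0$ gives $\delta(x_i)\le\gamma(x_i)=i$; hence $\delta=\gamma$ on every chain of maximal length. Here Condition~2 is used: the maximal length of a descending chain from a maximal element is independent of that element and equals $N$ (some maximal element tops a chain of length $N$, and no chain is longer), so every maximal element of $P$ sits atop a chain of maximal length, and therefore $\gamma$ and $\delta$ agree on all maximal elements. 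Since $\gamma-\delta$ is monotone and nonnegative, vanishes on all maximal elements, and every element of $P$ lies below a maximal one, $\gamma-\delta\equiv 0$; thus $\gamma=\delta$, so $\gamma$ is minimal. (Condition~1 is not needed for this direction.)

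For the ``only if'' direction I would argue by contraposition and build a witness $U$. Suppose $\gamma(x_j)\ne j$ for some index $j$ and some chain $x_1<\dots<x_N$ of maximal length; since $\gamma(x_i)\ge i$ always, $\gamma(x_j)>j$, and integrality together with strictness propagates this upward, so $\gamma(x_N)\ge N+1$ with $x_N$ a maximal element of $P$. Now Condition~1 enters: it makes the co-rank function $u\colon P\to\NN$, which assigns to $x$ the common length of an inextensible chain ascending from $x$, well defined, with $u=1$ on maximal elements, $u(x)=u(y)+1$ whenever $y$ covers $x$, and $u\le N$ everywhere. Put $\mu=N+1-u$, so $\mu$ is strictly monotone with $1\le\mu\le N$, $\mu=N$ on maximal elements, and $\mu(x_i)=i$ on every chain of maximal length; and take $U=\{x\in P:\gamma(x)>\mu(x)\}$. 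Along any chain of covers from $x$ to $y$ the function $\gamma$ increases by at least $1$ per step while $\mu$ increases by exactly $1$, so $\gamma-\mu$ is monotone and $U$ is an upper set; it contains $x_N$ (because $\gamma(x_N)\ge N+1>N=\mu(x_N)$), hence is nonempty. Condition (a) holds because $x\in U$ forces $\gamma(x)>\mu(x)\ge 1$. Condition (b) is the decisive point: for a covering pair $x<y$ with $x\notin U\ni y$ we have $\gamma(x)\le\mu(x)$ and $\gamma(y)>\mu(y)=\mu(x)+1$, whence $\gamma(y)-\gamma(x)\ge(\mu(x)+2)-\mu(x)=2$. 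So $U$ satisfies (a) and (b), and by the criterion $\gamma$ is not minimal.

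The bookkeeping in the first paragraph and the two propagation arguments are routine. The one genuinely delicate step is the construction of $U$ in the ``only if'' direction, which is also where Condition~1 is indispensable: the naive choice $U=\{x_N\}$ fails as soon as some cover $x<x_N$ has $\gamma(x)=\gamma(x_N)-1$, and patching this by enlarging $U$ could cascade. The resolution is to compare $\gamma$ with the canonical interior element $\mu$ coming from the co-rank: the super-level set $\{\gamma>\mu\}$ is automatically an upper set, and every cover crossing into it automatically has $\gamma$-gap at least $2$, since $\mu$ has already used up a gap of exactly $1$ there. Checking that this $U$ is an upper set and satisfies (b) is, I expect, the main obstacle, but with the comparison to $\mu$ in hand it is short. (One could equivalently phrase the criterion using only irreducible upper sets via Proposition~\ref{prop:irred-pos}, but that refinement is not needed.)
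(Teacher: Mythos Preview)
Your proof is correct and follows essentially the same approach as the paper. Both arguments establish sufficiency by using Condition~2 to conclude that $\gamma$ takes the value $N$ (the paper's $m$) on every maximal element, and both prove necessity by constructing the same witnessing upper set: your $U=\{x:\gamma(x)>\mu(x)\}$ with $\mu=N+1-u$ coincides with the paper's $U=\{x:\gamma(x)>m-\operatorname{coht}(x)\}$, since your co-rank $u$ is just $\operatorname{coht}+1$. Your explicit criterion with conditions (a) and (b), and your introduction of the comparison function $\mu$, package the argument a bit more formally, but the substance is identical.
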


\def\coht{\operatorname{coht}}

\begin{proof}
First we show sufficiency. By 2 all chains of maximal length have the same length $m$ that appears in the conclusion.	For a chain $x_1<\dots<x_m$ we must have $\gamma(x_i) \ge i$ since $\gamma$ is strictly monotone and ${\gamma(x_1)\ge }1$. Therefore the value $\gamma(x_i)=i$ is the smallest possible. By 2 this implies $\gamma(x)=m$ for all maximal elements $x$.  It follows immediately that $\gamma - \delta\notin \operatorname{int}(\rC(P))\cap\rM(P)$ for nonzero $\delta \in \rM(P)$ since  $\delta(x)\ge 1$ for at least one maximal element $x$.

In order to prove necessity, we define the coheight of $y\in P$ by 
$$
\coht(y) =\max\bigl\{k\mid \text{there exists a chain } y <x_1<\dots< x_k \text{ in } P \bigr\}.
$$
Every maximal element has coheight $0$. Suppose that $x>y$ is a cover of $y$. Then $\coht(y) =\coht (x)+1$, as follows from condition 1: every inextensible chain joining $x$ to a maximal element $z$ extends to such a chain from $y$ to $z$. The length of the chain increases by $1$.

We must show that $\gamma(z) = m$ for all maximal elements $z$ of $P$ if $\gamma - \delta$ is not in the interior of $C(P)$ for any nonzero monotone $\delta$. Assume that $\gamma(z)> m$ for at least one maximal element $z$. Then the set
$$
U = \bigl\{x\in P\mid  \gamma(x) > m -\coht(x)   \bigr\}
$$
contains $z$ and is therefore nonempty. We claim that $U$ is an upper set. Suppose that $x\in U$ and $y>x$. We must show that $y\in U$ as well. It is enough to consider a cover $y$ of $x$. Then
$$
\gamma(y) > \gamma(x) \ge  m -\coht(x) +1 = m - \coht(y).
$$
Next we claim that $\gamma(x) \le \gamma(y) -2$ if $x\notin U$, $x< y$ and $y \in U$.  In fact,
$$
\gamma(x) \le m - \coht(x) \le m - \coht(y) -1  \le \gamma(y) -2.
$$
It follows that $\gamma - \chi_U$ is still strictly monotone. It takes only positive values since $m-\coht(x)\ge 1$ for all $x\in P$, and therefore $x\notin U$ if $\gamma(x) = 1$. Thus $\gamma - \chi_U$ belongs to the interior of $C(P)$.	
\end{proof}

In \cite{p-partitions} a characterization of the complete intersection property of the monoid ring of $P$-partitions is given. 

\subsection{Prime elements}

Let $(M,+)$ be a commutative monoid. We say that $a\le_M b$ if there exists $c$ such that $a+c=b$ (recall that we already defined this relation for $M=\rM(P)$). If the monoid $M$ is cancellative ($a+b=a+c$ implies $b=c$) and reduced ($a+b=0$ implies $a=b=0$; no nontrivial units), then $\le_M$ is an order relation. We will say that $a$ \emph{divides} $b$ if $a\le_M b$.

An element $a$ in $M$ is a \emph{prime} if whenever $a\le_M (b+c)$, with $b,c\in M$, then either $a\le_M b$ or $a\le_M c$. Observe that prime elements are irreducible, while in general irreducible elements do not need to be prime.

If $(P,\le)$ is a finite partially ordered set, we can define on $\rM(P)$ the following order relation: $f\le g$ if $f(a)\le g(a)$ for all $a\in P$. Notice that 
$$
f\le_{\rM(P)} g\quad \hbox{implies}\quad f\le g.
$$

\begin{example}\label{ex:two-max-two-min}
Let $P$ be the partially ordered set with Hasse diagram
\begin{center}
\begin{tikzpicture}[->,>=stealth',auto,node distance=2cm, main node/.style={circle,draw}]

\node[main node] (1) {$a$};
\node[main node] (2) [right of=1] {$b$};
\node[main node] (3) [above of=1] {$c$};
\node[main node] (4) [above of=2] {$d$};

\path[every node/.style={font=\sffamily\small}]
(2) edge node  {} (3);

\path[every node/.style={font=\sffamily\small}]
(2) edge node  {} (4);

\path[every node/.style={font=\sffamily\small}]
(1) edge node {} (3);
\path[every node/.style={font=\sffamily\small}]
(1) edge node {} (4);
\end{tikzpicture}
\end{center}

We identify each monotone function over $P$ with a tuple $(a,b,c,d)$, where each coordinate is the corresponding value of the function in the node. The set of monotone functions can then be viewed as the set of nonnegative integer solutions to the inequalities 
$$
a\le c,\ a\le d,\ b\le c,\ b\le d.
$$
(See \cite[Section 4.5]{stanley-ec1} for an approach for $P$-partitions with the use of slack variables.)
The set of irreducibles is 
$$\{ (0,0,0,1),(0,0,1,0),(0,1,1,1),(1,0,1,1),(1,1,1,1)
\},
$$
which correspond respectively to the characteristic functions of the irreducible upper sets
$$
\{d\}, \{c\}, \{b,c,d\}, \{a,c,d\}, \{a,b,c,d\}.
$$
Notice that $\{\{a,c,d\},\{b,c,d\}\}$ is not a near-chain, and so we get an expression like in Equation \eqref{eq:no-nc}
$$
(1,0,1,1)+(0,1,1,1)=(1,1,1,1)+(0,0,1,0)+(0,0,0,1).
$$
As all the irreducibles appear in one of the two sides of the above equality, we deduce that $\rM(P)$ has no primes.
\end{example}

This example motivates the following characterization of prime elements: a prime element cannot appear in any of the sides of Equation \eqref{eq:no-nc}.

\begin{theorem}\label{th:char-prime}
Let $(P,\le)$ be a finite partially ordered set, and let $I$ be an irreducible upper set on $P$. 
Then $\chi_I$ is a prime element if and only if the following conditions hold:
\begin{enumerate}[1.]
\item $\chi_I\not\le_{\rM(P)}\chi_{G\cup H}$ and  $\chi_I \not\le_{\rM(P)} \chi_{G\cap H}$ for any irreducible upper sets $G,H$ with $G\neq I\neq H$;
\item if $J$ is an irreducible upper set other than $I$, then $\{I,J\}$ is a near-chain.
\end{enumerate}
\end{theorem}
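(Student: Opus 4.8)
The plan is to reduce the statement to a combinatorial fact about the canonical near-chain expression of Proposition~\ref{prop:exp-nc}. I would begin with two reformulations of divisibility by $\chi_I$. Elementarily, for $g\in\rM(P)$ one has $\chi_I\le_{\rM(P)}g$ if and only if $g-\chi_I$ is monotone with nonnegative values, that is, if and only if $I\subseteq\supp(g)$ and $g(y')>g(y)$ whenever $y'$ is a cover of $y$ with $y\notin I$ and $y'\in I$. The form I will actually use, valid under condition~2 (i.e. when $I$ together with any other irreducible upper set forms a near-chain), is: $\chi_I\le_{\rM(P)}g$ if and only if $I$ occurs in the canonical near-chain expression of $g$. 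Indeed, if $g-\chi_I=\sum_{W}a_W\chi_W$ is the canonical expression of $g-\chi_I\in\rM(P)$, then condition~2 makes $\{I\}\cup\{W\}$ a near-chain, so $g=\chi_I+\sum_W a_W\chi_W$ is the canonical expression of $g$ by the uniqueness in Proposition~\ref{prop:exp-nc}; the converse is immediate. I will also use freely that $\chi_J$ is an irreducible element of the monoid $\rM(P)$ for every irreducible upper set $J$ (Proposition~\ref{prop:irred-pos}), so that $\chi_I\le_{\rM(P)}\chi_J$ forces $I=J$.

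For the direction asserting that a prime $\chi_I$ must satisfy conditions~1 and~2, I would prove the contrapositive, using the identity $\chi_G+\chi_H=\chi_{G\cap H}+\chi_{G\cup H}$, which holds pointwise for any two upper sets $G,H$. If condition~2 fails, choose an irreducible upper set $J\ne I$ with $\{I,J\}$ not a near-chain; then $I\cap J\ne\emptyset$, so $I\cup J$ is connected, hence irreducible, and $I\cup J\ne I$ (since $J\not\subseteq I$). The identity gives $\chi_I\le_{\rM(P)}\chi_{I\cap J}+\chi_{I\cup J}$, while $\chi_I\not\le_{\rM(P)}\chi_{I\cap J}$ (compare supports: $I\not\subseteq J$) and $\chi_I\not\le_{\rM(P)}\chi_{I\cup J}$ (as $\chi_{I\cup J}$ is irreducible and distinct from $\chi_I$); so $\chi_I$ is not prime. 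If condition~1 fails, say $\chi_I\le_{\rM(P)}\chi_{G\cap H}$ or $\chi_I\le_{\rM(P)}\chi_{G\cup H}$ for irreducible $G,H$ with $G\ne I\ne H$, then the same identity yields $\chi_I\le_{\rM(P)}\chi_G+\chi_H$, whereas $\chi_I\not\le_{\rM(P)}\chi_G$ and $\chi_I\not\le_{\rM(P)}\chi_H$ since $\chi_G$ and $\chi_H$ are irreducible and different from $\chi_I$; so again $\chi_I$ is not prime.

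For the converse, assume conditions~1 and~2 and suppose $\chi_I\le_{\rM(P)}b+c$ with $b,c\in\rM(P)$. By the near-chain reformulation it suffices to show that if $I$ occurs in neither the canonical expression of $b$ nor that of $c$, then it does not occur in the canonical expression of $b+c$. The latter is obtained by starting from the formal sum $\sum_{K}a_K\chi_K+\sum_{L}a'_L\chi_L$ of the canonical expressions of $b$ and $c$ and iterating the reduction described before Theorem~\ref{th:canonical-sg}: while the current expression is not in near-chain form, pick an incompatible pair $\{A,A'\}$ and replace one copy each of $\chi_A$ and $\chi_{A'}$ by the characteristic functions $\chi_U$ of the irreducible components $U$ of $A\cap A'$ together with $\chi_{A\cup A'}$ (where $A\cup A'$ is irreducible, since $A\cap A'\ne\emptyset$). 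The crucial point is that condition~1 prevents such a step, applied to an expression in which $I$ does not occur, from creating $I$: if $A,A'\ne I$ and $A\cup A'=I$, then $\chi_I=\chi_{A\cup A'}$, so $\chi_I\le_{\rM(P)}\chi_{A\cup A'}$ contradicts condition~1; and if some irreducible component $U$ of $A\cap A'$ equals $I$, then $\chi_I=\chi_U\le_{\rM(P)}\chi_{A\cap A'}$, again contradicting condition~1. Hence, by induction on the number of reduction steps, $I$ never appears, in particular not in the canonical expression of $b+c$; by the reformulation this means $\chi_I\le_{\rM(P)}b$ or $\chi_I\le_{\rM(P)}c$, so $\chi_I$ is prime.

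The step I expect to be the most delicate is the near-chain reformulation of divisibility in the first paragraph: it relies on uniqueness in Proposition~\ref{prop:exp-nc} and on the fact that adjoining $I$ to an arbitrary near-chain of irreducible upper sets again yields a near-chain, which is exactly condition~2, so that $\chi_I+\sum_W a_W\chi_W$ is genuinely the canonical expression of $g$. Without condition~2 this sum need not be in canonical form and the equivalence fails, which is why condition~2 (and not condition~1 alone) is needed; the monoid of Example~\ref{ex:two-max-two-min} already shows that neither condition alone suffices. Everything else — termination of the reduction process, pointwise validity of the lattice identity, and the irreducibility of the elements $\chi_J$ — is either recorded in the text or immediate.
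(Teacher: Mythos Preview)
Your proof is correct and follows essentially the same route as the paper: the necessity of the two conditions is deduced from the lattice identity $\chi_G+\chi_H=\chi_{G\cap H}+\chi_{G\cup H}$ and irreducibility of the $\chi_J$, and sufficiency is obtained by observing (via condition~2 and uniqueness in Proposition~\ref{prop:exp-nc}) that $\chi_I$ must occur in the normal form of $b+c$, then tracking the reduction process of Theorem~\ref{th:canonical-sg} and using condition~1 to rule out the appearance of $I$ at any step. The only cosmetic differences are that you isolate the ``near-chain reformulation of divisibility'' as a separate lemma and argue the necessity half by contrapositive, and in treating condition~2 you note directly that $I\cup J$ is irreducible rather than decomposing it.
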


\begin{proof}
Let us first show the necessity of the conditions.
\begin{enumerate}[1.]
\item  We have
$$
\chi_G+\chi_H=\chi_{G\cap H}+\chi_{G\cup H}.
$$
Therefore, if $\chi_I$ divides one of the two ``factors'' on the right hand side, it must divide $\chi_G$ or $\chi_H$, which is impossible since they have no proper divisors.

\item Assume to the contrary that there exists an irreducible upper set $J\neq I$ such that $\{I,J\}$ is not a near-chain. Then we can find an expression like \eqref{eq:no-nc}, $\chi_I+\chi_J=\sum_{U\in \mathcal{U}}\chi_U+ \sum_{V\in \mathcal{V}}\chi_V$. This means that $\chi_I\le_{\rM(P)} \sum_{U\in \mathcal{U}}\chi_U+ \sum_{V\in \mathcal{V}}\chi_V$, but then $\chi_I\le_{\rM(P)} \chi_H$ for some $H\in \mathcal{U}\cup\mathcal{V}$, which is impossible. 
\end{enumerate}

Now we turn to sufficiency. Assume that $\chi_I\le_{\rM(P)} f+g$, for some $f,g\in \rM(P)$. Then there exists $h\in\rM(P)$ such that $\chi_I+h=f+g$. Write $h=\sum_{H\in \mathcal{H}} a_H\chi_H$ with $\mathcal{H}$ a near-chain (Proposition~\ref{prop:exp-nc}). By condition 2, we have that $\mathcal{H}\cup\{I\}$ is a near-chain. Thus the normal form of $\chi_I+h$ is $(a_I+1)\chi_I+\sum_{H\in \mathcal{H}\setminus\{I\}} a_H\chi_H$ (where we set $a_I=0$ in case $I\not\in \mathcal{H}$). Assume that  $\chi_I$ appears neither in any expression of $f$ nor in any expression of $g$ (if this is not the case, then we are done). In light of Theorem~\ref{th:canonical-sg}, by applying replacements like the ones given in Equation \eqref{eq:no-nc}, we should be able to go from the expression of $f+g$ to the normal form $(a_I+1)\chi_I+\sum_{H\in \mathcal{H}\setminus\{I\}} a_H\chi_H$. But this implies that at some point  $\chi_I$ will divide either $\chi_{G\cup H}$ or $\chi_{G\cap H}$ for some $H, G$ irreducible upper sets different from $I$, contradicting condition 1.	
\end{proof}

\begin{example}\label{ex:with-ab-a}
Let $P$ be the partially ordered set with Hasse diagram
\begin{center}
\begin{tikzpicture}[->,>=stealth',auto,node distance=2cm, main node/.style={circle,draw}]

\node[main node] (1) {$a$};
\node[main node] (2) [above left of=1] {$b$};
\node[main node] (3) [above right of=1] {$c$};
\node[main node] (4) [above left of=3] {$d$};

\path[every node/.style={font=\sffamily\small}]
(1) edge node [right] {} (2)
(2) edge node [left] {} (4);

\path[every node/.style={font=\sffamily\small}]
(1) edge node [left] {} (3)
(3) edge node [right] {} (4);
\end{tikzpicture}
\end{center}
As in Example~\ref{ex:two-max-two-min}, we identify each monotone function over $P$ with a tuple $(a,b,c,d)$. In our example the set of monotone functions corresponds to the set of nonnegative integer solutions to the inequalities 
$$
a\le b,\ a\le c,\ b\le d,\ c\le d.
$$
The set of irreducibles can be computed by using Proposition~\ref{prop:irred-pos}, and is 
$$
\{ (0,0,0,1),(0,0,1,1),(0,1,0,1),(0,1,1,1),(1,1,1,1)\}.
$$
These correspond, respectively, to the characteristic functions of the irreducible upper sets 
$$
\{d\}, \{c,d\}, \{b,d\}, \{b,c,d\}, \{a,b,c,d\}.
$$
The only prime element is $(1,1,1,1)$. Notice that $\{a,b,c,d\}$ is the only irreducible upper set fulfilling conditions 1 and 2 in Theorem~\ref{th:char-prime}.

Another way to see that $(1,1,1,1)$ is prime is by observing that it is the only irreducible with the first coordinate not equal to zero. So if it divides an expression, that expression must include $(1,1,1,1)$. This means that $(1,1,1,1)$ is prime.
\end{example}

\begin{proposition}\label{prop:prime1}
Let $f$ be an irreducible monotone function over a finite partially ordered set $P$. Assume that the support of $f$ is not contained in the union of the supports of the other irreducible monotone functions on $P$. Then $f$ is prime.
\end{proposition}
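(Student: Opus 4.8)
The plan is to identify the irreducible monotone functions with the indicator functions $\chi_I$ of irreducible upper sets via Proposition~\ref{prop:irred-pos}, so that $f=\chi_I$ for some irreducible upper set $I$, and then to translate the hypothesis into a purely combinatorial statement: since $\supp(\chi_J)=J$ for every irreducible upper set $J$, the assumption that $\supp(f)=I$ is not contained in the union of the supports of the other irreducible monotone functions says exactly that there is a point $p\in I$ belonging to no irreducible upper set other than $I$. The main tool for the argument itself will be the near-chain normal form of Proposition~\ref{prop:exp-nc}.

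So suppose $\chi_I\le_{\rM(P)}g+h$ with $g,h\in\rM(P)$; I want to deduce $\chi_I\le_{\rM(P)}g$ or $\chi_I\le_{\rM(P)}h$. Since every element of $\rM(P)$ takes nonnegative values, $(g+h)(p)\ge\chi_I(p)=1$, hence $g(p)\ge1$ or $h(p)\ge1$; without loss of generality $p\in\supp(g)$. The crucial step is to show that $I$ occurs in the near-chain decomposition of $g$. Write $g=\sum_{J\in\mathcal J}a_J\chi_J$ with $\mathcal J$ a near-chain and every $a_J>0$ (Proposition~\ref{prop:exp-nc}). As observed inside the proof of that proposition, the inclusion-maximal members of $\mathcal J$ are pairwise disjoint irreducible upper sets whose union is $\supp(g)$, i.e.\ they constitute the decomposition of $\supp(g)$ supplied by Lemma~\ref{lem:exp-union-irr}. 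Since $p\in\supp(g)$, the point $p$ lies in one of these maximal members, which is an irreducible upper set containing $p$ and therefore equals $I$ by the choice of $p$. Hence $I\in\mathcal J$ and $a_I\ge1$.

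It then follows immediately that $g-\chi_I=(a_I-1)\chi_I+\sum_{J\in\mathcal J\setminus\{I\}}a_J\chi_J$ is a nonnegative integer combination of indicator functions of upper sets, so $g-\chi_I\in\rM(P)$ and $\chi_I\le_{\rM(P)}g$, as required. One can also bypass the normal form at this point: once $I$ is known to be one of the disjoint irreducible components of $\supp(g)$, any $a<b$ with $b\in I$ and $a\notin I$ must satisfy $a\notin\supp(g)$, since otherwise the component of $\supp(g)$ containing $a$ would be disjoint from $I$ yet, being an upper set, would also contain $b$; thus $g(a)=0<g(b)$, and this is precisely what is needed to check directly that $g-\chi_I$ is monotone and nonnegative.

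I expect the only delicate point to be the identification of $I$ as an irreducible component of $\supp(g)$: this is where the hypothesis on $p$ is used, and where one must invoke the compatibility between the near-chain of a function and the Lemma~\ref{lem:exp-union-irr} decomposition of its support. Everything else is routine bookkeeping with the definition of $\le_{\rM(P)}$ and the fact that elements of $\rM(P)$ are nonnegative.
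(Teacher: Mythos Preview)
Your proof is correct. In fact the paper does not supply a formal proof of this proposition at all: it is stated immediately after Example~\ref{ex:with-ab-a}, whose closing paragraph sketches the idea in a special case (``it is the only irreducible with the first coordinate not equal to zero, so if it divides an expression, that expression must include $(1,1,1,1)$''). Your argument is a careful general version of exactly this observation.

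The one place where your write-up goes beyond what the paper hints at is the use of the near-chain normal form to pin down that $I$ actually occurs in a decomposition of $g$; the paper's sketch would have you just say ``any expression of $g$ as a sum of irreducibles must use $\chi_I$, since $\chi_I$ is the only irreducible nonzero at $p$'', which is already enough (one does not need the maximal members of the near-chain, only that $p\in\supp(g)$ forces some $J\in\mathcal J$ with $p\in J$, hence $J=I$). Your alternative ending, showing directly that $g-\chi_I$ is monotone once $I$ is an irreducible component of $\supp(g)$, is also correct and perhaps closer in spirit to the paper's informal reasoning.
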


Observe that if $P$ has a minimum (a single minimal element), then $P$ is an irreducible upper set. In this case $P$ fulfills conditions 1 and 2 of Theorem~\ref{th:char-prime}, and consequently $\chi_P$ is a prime element of $\rM(P)$. This is precisely the prime element that appears in Example~\ref{ex:with-ab-a}.

\begin{corollary}\label{cor:minimum}
Let $(P,\le)$ be a finite partially ordered set. If $P$ has a minimum, then $\chi_P$ is a prime element of $\rM(P)$.
\end{corollary}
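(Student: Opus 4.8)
The plan is to deduce this from Proposition~\ref{prop:prime1}, using the simple observation that the minimum of $P$ lies in the support of $\chi_P$ and of no other irreducible monotone function. Write $\hat 0$ for the minimum of $P$.

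First I would record the elementary fact that an upper set $U$ of $P$ contains $\hat 0$ if and only if $U=P$: indeed, if $\hat 0\in U$ and $U$ is an upper set, then $P=\uar\{\hat 0\}\subseteq U$. A direct consequence is that $P$ itself is an irreducible upper set, since a decomposition $P=U_1\cup U_2$ into two disjoint nonempty upper sets would place $\hat 0$ in one of them, say $U_1$, forcing $U_1=P$ and hence $U_2=\emptyset$, a contradiction. By Proposition~\ref{prop:irred-pos} this shows that $\chi_P$ is one of the irreducible elements of $\rM(P)$, and that the remaining irreducible monotone functions on $P$ are precisely the $\chi_I$ with $I$ an irreducible upper set, $I\neq P$.

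The crucial step is then immediate: every such $I$ is a proper subset of $P$, hence avoids $\hat 0$ by the fact just recorded, so $\supp(\chi_I)=I\subseteq P\setminus\{\hat 0\}$. Therefore the union of the supports of the irreducible monotone functions other than $\chi_P$ is contained in $P\setminus\{\hat 0\}$, and in particular does not contain $\supp(\chi_P)=P$. Proposition~\ref{prop:prime1} then yields that $\chi_P$ is prime. (Alternatively, one can verify conditions~1 and~2 of Theorem~\ref{th:char-prime} directly for $I=P$: condition~2 is automatic because $J\subseteq P$ for every upper set $J$, so $\{P,J\}$ is always a near-chain; and condition~1 holds because for irreducible upper sets $G,H\neq P$ both $G\cap H$ and $G\cup H$ avoid $\hat 0$, so $\chi_{G\cap H}$ and $\chi_{G\cup H}$ vanish at $\hat 0$ whereas $\chi_P(\hat 0)=1$, ruling out $\chi_P\le_{\rM(P)}\chi_{G\cap H}$ and $\chi_P\le_{\rM(P)}\chi_{G\cup H}$.)

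There is no genuine obstacle in this argument; the only step meriting a moment's attention is the claim that $P$ is an irreducible upper set, and that follows at once from the observation that any upper set containing the minimum must be all of $P$. Everything else is a formal consequence of Proposition~\ref{prop:irred-pos} and Proposition~\ref{prop:prime1}.
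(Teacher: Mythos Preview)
Your argument is correct. Your primary route, via Proposition~\ref{prop:prime1}, differs slightly from the paper's: the paper simply observes that when $P$ has a minimum, $P$ is an irreducible upper set and then checks conditions~1 and~2 of Theorem~\ref{th:char-prime} directly---which is exactly the alternative you spell out in your parenthetical remark. Both routes are one-line applications once $P$ is seen to be irreducible, so there is no substantive difference; your write-up in fact makes the verification of the hypotheses more explicit than the paper does.
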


Indeed, any prime element of $\rM(P)$ comes from an irreducible upper set with a minimum and some extra conditions, as we see next.

\begin{theorem}\label{th:char-prime-comb}
Let $(P,\le)$ be a finite partially ordered set and let $I$ be an irreducible upper set of $P$. Then $\chi_I$ is a prime element of $\rM(P)$ if and only if 
\begin{enumerate}[1.]
\item there is $v\in P$ such that $I=\UAR \{v\}$,
\item if $x\in P\setminus I$ is such that $\UAR \{x\}\cap I\neq \emptyset$, then $x\le v$,
\item the set $\{x\in P\setminus I \mid\ \UAR \{x\}\cap I\neq \emptyset\}$ is either empty or has a maximum.
\end{enumerate}
\end{theorem}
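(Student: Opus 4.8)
The plan is to derive the statement from Theorem~\ref{th:char-prime} by showing that conditions 1--3 are equivalent to the two conditions appearing there. The computational core is an explicit criterion for divisibility among characteristic functions: for upper sets $I,K$ of $P$ one has $\chi_I\le_{\rM(P)}\chi_K$ if and only if $I\subseteq K$ and no element of $K\setminus I$ lies below an element of $I$ (equivalently, $\uar\!(K\setminus I)\cap I=\emptyset$). Indeed $\chi_K-\chi_I$ is nonnegative exactly when $I\subseteq K$, and it then equals $1$ on $K\setminus I$ and $0$ on $I$, so it is monotone exactly under the displayed condition. I shall also use freely that an upper set is irreducible if and only if its comparability graph is connected, and in particular that $\uar\!\{v\}$ is always irreducible.

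\emph{From the combinatorial conditions to Theorem~\ref{th:char-prime}.} Assume 1--3 and write $I=\uar\!\{v\}$ and $W=\{x\in P\setminus I\mid\uar\!\{x\}\cap I\neq\emptyset\}$. For condition~2 of Theorem~\ref{th:char-prime}, let $J\neq I$ be irreducible with $\{I,J\}$ not a near-chain; then $J\setminus I$ and $J\cap I$ are both nonempty, so connectedness of $J$ produces comparable $a,b\in J$ with $a\notin I$, $b\in I$, necessarily $a<b$ since $I$ is an upper set. Thus $a\in W$, so $a\le v$ by condition~2, hence $v\in\uar\!\{a\}\subseteq J$ and $I=\uar\!\{v\}\subseteq J$, a contradiction. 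For condition~1 of Theorem~\ref{th:char-prime}: if $\chi_I\le_{\rM(P)}\chi_{G\cup H}$ with $G,H$ irreducible and $G\neq I\neq H$, then $v\in G\cup H$, say $v\in G$, so $I\subseteq G$ with $I\neq G$; connectedness of $G$ then yields $a\in G\setminus I$ lying below an element of $I$, violating the divisibility criterion. If $\chi_I\le_{\rM(P)}\chi_{G\cap H}$, then $I\subseteq G$ and $I\subseteq H$ with both inclusions proper, and the same argument produces an element of $G\setminus I$ and one of $H\setminus I$ lying in $W$; the maximum $w$ of $W$ (condition~3) then satisfies $w\in G\cap H$, $w\notin I$, and $w\le v$ (condition~2), so $w\in(G\cap H)\setminus I$ lies below $v\in I$ --- again impossible.

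\emph{From Theorem~\ref{th:char-prime} to the combinatorial conditions.} The delicate point is condition~1, that $I$ has a single minimal element. If it had at least two, I would produce irreducible upper sets $G,H$, both proper subsets of $I$, with $G\cup H=I$; then $\chi_I=\chi_{G\cup H}$ violates Theorem~\ref{th:char-prime}(1). To obtain such $G,H$, consider the graph on the antichain $\min I$ with an edge $v\approx v'$ whenever $\uar\!\{v\}\cap\uar\!\{v'\}\neq\emptyset$: connectedness of $I$ makes this graph connected, and for $S\subseteq\min I$ the upper set $\uar\!(S)$ is irreducible precisely when $S$ spans a connected subgraph (the pieces $\uar\!\{v\}$, $v\in S$, are each connected, two of them meet iff $v\approx v'$, and any comparability edge between two of them forces a meeting). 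Any connected graph on at least two vertices is covered by two proper connected induced subgraphs --- two singletons if it has two vertices; otherwise, for a leaf $v_1$ of a spanning tree with tree-neighbour $v_2$, the sets $\{v_1,v_2\}$ and $\min I\setminus\{v_1\}$ --- and this delivers $G,H$. Once $I=\uar\!\{v\}$ is established, condition~2 follows by contraposition: if some $x\in P\setminus I$ has $\uar\!\{x\}\cap I\neq\emptyset$ but $x\not\le v$, then $J=\uar\!\{x\}$ is irreducible, $J\neq I$, and $\{I,J\}$ is not a near-chain, contradicting Theorem~\ref{th:char-prime}(2). Finally, for condition~3, suppose $W$ is nonempty with distinct maximal elements $w_1,w_2$; by condition~2, $w_1,w_2<v$, so $G=\uar\!\{w_1\}$ and $H=\uar\!\{w_2\}$ are irreducible, distinct from $I$, and share $v$ in their intersection; any $y\in(G\cap H)\setminus I$ lying below an element of $I$ would lie in $W$ and dominate both $w_1$ and $w_2$, hence equal both --- impossible --- so $\chi_I\le_{\rM(P)}\chi_{G\cap H}$, contradicting Theorem~\ref{th:char-prime}(1).

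The main obstacle I foresee is precisely the decomposition of an irreducible upper set with several minimal elements into two \emph{proper} irreducible upper subsets that still cover it: keeping both pieces connected is what forces the passage to the auxiliary graph on $\min I$ and the elementary graph-covering observation. Everything else is bookkeeping with the divisibility criterion, where conditions~2 and~3 supply exactly the element needed to reach the contradiction.
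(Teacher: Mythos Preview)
Your proof is correct, but it diverges from the paper's in two places.

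For sufficiency, you verify the two conditions of Theorem~\ref{th:char-prime}; the paper instead argues primeness directly, subtracting copies of $\chi_{\uar\{w\}}$ (where $w$ is the maximum of $W$) from an equation $\chi_I+h=f+g$ until $f(w)=g(w)=0$, and then showing $\supp(f)\setminus I$ is an upper set. Your route is arguably cleaner here, since the divisibility criterion packages the relevant case analysis once and for all.

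For necessity of condition~1, the paper bypasses Theorem~\ref{th:char-prime} entirely: if $\min I$ has two or more elements, split it as $A\sqcup B$ and observe $\chi_I\le_{\rM(P)}\chi_{\uar A}+\chi_{\uar B}$ while $\chi_I$ divides neither summand, so $\chi_I$ is not prime. This needs no irreducibility of $\uar A$ or $\uar B$. You instead insist on exhibiting \emph{irreducible} $G,H\neq I$ with $G\cup H=I$ to violate Theorem~\ref{th:char-prime}(1), which forces the auxiliary graph on $\min I$ and the spanning-tree covering lemma. That argument is sound, but the detour is the price of routing everything uniformly through Theorem~\ref{th:char-prime}; the paper's mixed strategy (direct for condition~1, via Theorem~\ref{th:char-prime} for conditions~2 and~3) avoids it. For conditions~2 and~3 your necessity arguments coincide with the paper's.
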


\begin{proof}
\emph{Necessity.} Assume that $I$ is not principal, that is, there exist disjoint nonempty subsets $A,B\subseteq P$ such that $\UAR  (A\cup B)=I$, $\UAR  A\neq I$, $\UAR B\neq I$. Let $J=\UAR A$ and $H=\UAR B$. Then $\chi_I$ divides $\chi_J+\chi_H$, but it does not divide either $\chi_J$ or $\chi_H$. 

If $x\in P\setminus I$ is such that $\UAR \{x\}\cap I\neq \emptyset$, then by condition 2 of Theorem~\ref{th:char-prime}, it follows that $I\subseteq \UAR \{x\}$, and thus $x\le v$.

Now assume that $\{x\in P\setminus I \mid\ \UAR \{x\}\cap I\neq \emptyset\}$ is not empty and that there are at least two maximal elements in this set, say $a$ and $b$. Let $J=\UAR\{a\}$ and $H=\UAR\{b\}$. Both are principal and therefore irreducible upper sets. Then, again by condition 2  of Theorem~\ref{th:char-prime}, it follows that $I\subset J$ and $I\subset H$.  Take $c\in (J\cap H)\setminus I$. Then $a\le c$ and $b\le c$, and so $\UAR\{c\}\cap I$ must be empty, since otherwise neither $a$ nor $b$ would be maximal elements. This implies that $K=(J\cap H)\setminus I$ is an upper set. Then $\chi_{J\cap H}=\chi_K+\chi_I$, and $\chi_I\le_{\rM(P)} \chi_{J\cap H}$, contradicting condition 1 in Theorem~\ref{th:char-prime}.

\emph{Sufficiency.}  If $\{x\in P\setminus I \mid\ \UAR \{x\}\cap I\neq \emptyset\}$ is empty, then $P\setminus I$ is an upper set, and $P$ is the disjoint union of $I$ and $P\setminus I$. Therefore Proposition \ref{prop:prime1} implies that $\chi_I$ is prime.	

Now assume that $\{x\in P\setminus I \mid\ \UAR \{x\}\cap I\neq \emptyset\}$ is nonempty. Assume further that $\chi_I\le_{\rM(P)} f + g$ with $f,g\in \rM(P)$. 	Then $\chi_I+h=f+g$ for some $h\in \rM(P)$. Let $w$ be the maximum of $\{x\in P\setminus I \mid\ \uparrow\!\{x\}\cap I\neq \emptyset\}$. Clearly $\chi_I(w)=0$. Write $\chi_I+h-\chi_{\supp(f)}=f'+g$, with $f'=f-\chi_{\supp(f)}$.

We claim thatt $h-\chi_{\supp(f)}\in \rM(P)$ if $f(w)\neq 0$, and prove it below. Assume for the moment that the claim holds. Then  we can repeat the process until $\chi_I+h'=f'+g'$, with $f'\le_{\rM(P)} f$, $g'\le_{\rM(P)} g$ and $f'(w)=g'(w)=0$. As $\chi_I(v)=1$, either $f'(v)\ge 1$ or $g'(v)\ge 1$ (or both). Assume without loss of generality that $f'(v)\ge 1$. Then $I\subseteq \supp(f')$. If $I=\supp (f')$, then $\chi_I\le_{\rM(P)} f'\le_{\rM(P)} f$, and we are done. So assume that $\supp (f')\setminus I$ is not empty. Take $x$ in this set. Then $x\in \supp (f')$, and as $f'$ is a monotone function on $P$, we have $\uparrow\{x\}\subseteq \supp (f')$. Also $\uparrow\{x\}\cap I$ is empty, because otherwise, $x\le w$, and thus $w\in \supp (f')$, which is impossible. Hence $\uparrow\{x\}\subseteq \supp (f')\setminus I$, and this means that $J=\supp (f')\setminus I$ is an ideal.  Hence $\chi_I+\chi_J=\chi_{\supp (f')}$, and $\chi_I\le_{\rM(P)} \chi_{\supp (f')}\le_{\rM(P)} f$.

It remains to prove the claim above. The first step is that $h-\chi_{\supp(f)}$ has nonnegative values. By assumption $f(w)>0$, and since $w\notin I$, we have $h(w)>0$ as well. If follows that $h(z) >0$ for all $z\in I$ since $I\subset \UAR (w)$. Pick $x\in\supp(f)$. If $x\notin I$, then $h(x)>0$, since $\chi_I+h=f+g$. If $x\in I$, then $h(x)>0$ as well, as just seen.

Now we turn to the monotonicity of $h'=h-\chi_{\supp(f)}$. It is enough to check that $h'(x) \le h'(y)$ if $y$ is a cover of $x$. Since
$$
h' = f' +g -\chi_I,\qquad f'=f-\chi_{\supp(f)}\in \rM(P),\ g\in \rM(P),
$$
we can further assume that $x\notin I$, $y\in I$. But this reduces the question to the case in which $x=w$ and $y=v$. The assumption $f(w)>0$ implies that $\chi_{\supp(f)}(w) = \chi_{\supp(f)}(v)=1$, and we are done. 
\end{proof}

Observe that we can recover Corollary~\ref{cor:minimum} easily with this new characterization.

\section{The monoid of arithmetic multiplicities}
\subsection{Recap on matroids and arithmetic matroids}\label{sec:recalls}

We collect here some basic definitions in order to set some notation. For background on matroid theory we refer, for instance, to Oxley's textbook \cite{Oxley}, while our presentation of arithmetic matroids follows mostly \cite{BM}.
\medskip

\newcommand{\rk}{\operatorname{rk}}

A {\em matroid} is given by a pair $(E,\rk)$, where $E$ is a finite
set and $\rk : 2^E \to \mathbb N$ is a function such that, for all $X,Y\subseteq E$,
\begin{itemize}
\item[(R1)] $\rk(X)\leq \vert X \vert$,
\item[(R2)] $X\subseteq Y$ implies $\rk(X)\leq \rk(Y)$,
\item[(R3)] $\rk(X\cup Y) + \rk(X\cap Y) \leq \rk(X) + \rk (Y)$.
\end{itemize}

Given an element $e\in E$, we can define two matroids on the set $E\setminus \{e\}$: the \emph{deletion} $M_1$ having rank function $\rk_1$ which is simply the restriction of $\rk$, and the \emph{contraction} $M_2$ having rank function $\rk_2$ defined as $\rk_2(A) \doteq \rk(A\cup\{e\}) - \rk(\{e\})$.

A {\em molecule} is a triple $(R,F,T)$ of pairwise disjoint subsets of $E$ such that, for every $A\subseteq E $ with $R\subseteq A \subseteq R\cup F\cup	T$,
$$
\rk(A) = \rk(R) + \vert A\cap F \vert.
$$

As remarked in \cite{BM}, $(R,F,T)$ is a molecule if and only if, after deleting the elements in $E\setminus (R\cup F\cup T)$ and contracting the elements in $R$, $F$ becomes a set of coloops and $T$ becomes a set of loops.
We say that a molecule is \emph{nontrivial} if both $T$ and $F$ are nonempty.

A {\em quasi-arithmetic matroid} is a triple $(E,\rk,m)$ where $(E,\rk)$ is a matroid, and $m: 2^E \to \mathbb N$ is a function satisfying the following axioms.
\begin{itemize}
\item[(A1)] For all $A\subseteq E$ and all $e\in E$,

if $\rk(A\cup \{e\})> \rk(A)$, $m(A)$ divides $m(A\cup \{e\})$;

if $\rk(A\cup \{e\})= \rk(A)$, $m(A\cup \{e\})$ divides $m(A)$.

\item[(A2)]  For every molecule $\alpha=(R,F,T)$ of $(E,\rk)$,
$$
m(R)m(R\cup F\cup T) = m(R\cup F) m(R\cup T).
$$
\end{itemize}
A function $m$ for which these axioms hold is known as a \emph{multiplicity function} on $(E,\rk)$.

An \emph{arithmetic matroid} is a quasi-arithmetic matroid satisfying the following additional axiom:
\begin{itemize}

\item[(P)] For every molecule $\alpha=(R,F,T)$ of $(E,\rk)$,
$$ 
(-1)^{\vert T\vert}
\sum_{R\subseteq A \subseteq R\cup F\cup T}
(-1)^{\vert (R\cup F\cup T) \setminus A \vert} m(A)\geq 0.
$$
\end{itemize}

Axiom $(P)$ was introduced to assure the positivity of an invariant called the arithmetic Tutte polynomial, and is motivated by applications to geometry, while axioms $(A1)$ and $(A2)$ have a more algebraic nature.
Given a matroid $M=(E,\rk)$, we denote by $\mathcal{Q}(M)$ the set of quasi-arithmetic matroids of the form $(E,\rk,m)$, and by $\mathcal{A}(M)$ the set of arithmetic matroids of the form $(E,\rk,m)$.

Given two (possibly different) functions $m', m'' : 2^E \to \mathbb{N}$, let us consider their (point-wise) product $m$, i.e. the function defined as $m(A):= m'(A)m''(A)$ for all $A\subseteq E$. Clearly if  $m', m''$ satisfy axioms (A1) and (A2), also their product $m$ does. So $\mathcal{Q}(M)$ is a commutative monoid, whose unit is the multiplicity identically equal to $1$. Although it is less obvious, also the axiom (P) is preserved by the product, that is, the following result holds.

\begin{theorem}[Delucchi-Moci \cite{DeMo}]
If both $(E,\rk,m')$ and $(E,\rk,m'')$ are arithmetic matroids, then $(E,\rk,m'm'')$ is also an arithmetic matroid. In other words, $\mathcal{A}(M)$ is a submonoid of $\mathcal{Q}(M)$.
\end{theorem}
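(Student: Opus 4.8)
The plan is to verify that $m=m'm''$ satisfies (A1), (A2) and (P). Axioms (A1) and (A2) pass to the pointwise product for trivial reasons: if $\rk(A\cup\{e\})>\rk(A)$ then $m'(A)\mid m'(A\cup\{e\})$ and $m''(A)\mid m''(A\cup\{e\})$, so $m(A)\mid m(A\cup\{e\})$, and symmetrically in the other case of (A1); and (A2) is an identity between two products of $m$-values. So the theorem amounts to the stability of (P) under products. For a molecule $\beta=(R,F,T)$ and a multiplicity $n$ write $q(\beta,n)$ for the expression $(-1)^{|T|}\sum_{R\subseteq A\subseteq R\cup F\cup T}(-1)^{|(R\cup F\cup T)\setminus A|}n(A)$ occurring in (P).

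\emph{Step 1: reduction to pure molecules.} Fix a molecule $\beta=(R,F,T)$. For $B\subseteq F$ and $C\subseteq T$ the triple $(R\cup B,\,F\setminus B,\,C)$ is again a molecule — the rank identity for $\beta$ restricts to it, because $\rk(R\cup B)=\rk(R)+|B|$ — so (A2) applied to it gives $m(R\cup B)\,m(R\cup F\cup C)=m(R\cup F)\,m(R\cup B\cup C)$. Writing each admissible $A$ as $A=R\cup B\cup C$ and substituting $m(R\cup B\cup C)=m(R\cup B)m(R\cup F\cup C)/m(R\cup F)$ into $q(\beta,m)$, the double sum factors through the sums over $B$ and over $C$ separately, and one obtains
\[
m(R\cup F)\cdot q\bigl((R,F,T),m\bigr)=q\bigl((R,F,\emptyset),m\bigr)\cdot q\bigl((R\cup F,\emptyset,T),m\bigr),
\]
the two molecules on the right being the ones in which $F$ is a set of elements independent over $R$, and $T$ a set of elements dependent over $R\cup F$. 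This identity holds verbatim for $m'$ and $m''$, which are arithmetic, so both factors on the right are non-negative for $m'$ and for $m''$; since $m(R\cup F)>0$, it remains only to show that $q(\beta,\cdot)$ stays non-negative under products for molecules of the two special shapes $(R,F,\emptyset)$ and $(R',\emptyset,T)$. The second (``pure loop'') case reduces to the first (``pure coloop'') case by passing to the dual arithmetic matroid, under which loops become coloops and $(m'm'')^{*}=(m')^{*}(m'')^{*}$.

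\emph{Step 2: the pure coloop case.} Now $\rk(R\cup B)=\rk(R)+|B|$ for all $B\subseteq F$, axiom (A2) is vacuous, and the hypotheses are just the divisibilities $m(R\cup B)\mid m(R\cup B\cup\{f\})$ from (A1) together with $q((R,F,\emptyset),m)\ge 0$ — which is a genuine restriction, as it already fails for suitable quasi-arithmetic $m$. I would induct on $|F|$. The divisibility relations let one express each value $m(R\cup B)$ as $m(R)$ times a product of integer ``increment ratios'' along the coloops in $B$, and forming $m'm''$ multiplies the corresponding ratios of $m'$ and $m''$; the induction step then comes down to a polynomial inequality in those ratios which, after multiplying the two inductive inequalities for $m'$ and $m''$, follows by absorbing the cross terms into estimates of the form $(a-1)(b-1)\ge 0$ for positive integers $a,b$. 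This mechanism is transparent for $|F|=1$, where the claim is simply $m(R\cup f)\ge m(R)$, and for $|F|=2$, where with $\alpha_i=m(R\cup f_i)/m(R)$ it reduces to $(\alpha_1'-1)(\alpha_2''-1)+(\alpha_2'-1)(\alpha_1''-1)\ge 0$.

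The main obstacle is exactly this last step. The factorization through (A2) isolates the difficulty cleanly and formally, but positivity of the top-degree alternating difference on a pure coloop molecule of large rank is a real phenomenon, and controlling the higher ratios $m(R\cup B)/\operatorname{lcm}\{m(R\cup B'):B'\subseteq B,\ |B'|=|B|-1\}$ through a product while keeping the cross-term bookkeeping under control for arbitrary $|F|$ is where the non-formal part of the proof lies.
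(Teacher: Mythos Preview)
The paper does not prove this theorem: it is quoted as a result of Delucchi and Moci and attributed to \cite{DeMo}, with no argument given. So there is no proof in the present paper to compare your attempt against.

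On the merits of your attempt itself: the reduction in Step~1 is correct and is the natural opening move. Axiom (A2) applied to the sub-molecules $(R\cup B,\,F\setminus B,\,C)$ yields exactly the factorization
\[
m(R\cup F)\,q\bigl((R,F,T),m\bigr)=q\bigl((R,F,\emptyset),m\bigr)\cdot q\bigl((R\cup F,\emptyset,T),m\bigr),
\]
so (P) for $m'm''$ on an arbitrary molecule follows once it is known on the two pure molecules; the duality reduction from the pure-loop to the pure-coloop case is also legitimate. Your $|F|\le 2$ computation in Step~2 is correct as well: with $\beta'\ge \alpha_1'+\alpha_2'-1$ and $\beta''\ge \alpha_1''+\alpha_2''-1$ one indeed gets $\beta'\beta''-\alpha_1'\alpha_1''-\alpha_2'\alpha_2''+1\ge (\alpha_1'-1)(\alpha_2''-1)+(\alpha_2'-1)(\alpha_1''-1)\ge 0$.

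The gap is precisely where you locate it yourself. For $|F|\ge 3$ the desired inequality $q((R,F,\emptyset),m'm'')\ge 0$ is not a consequence of multiplying the two top-level inequalities $q((R,F,\emptyset),m')\ge 0$ and $q((R,F,\emptyset),m'')\ge 0$: one must use (P) on \emph{all} sub-molecules $(R\cup B',\,F\setminus B',\,\emptyset)$ for both $m'$ and $m''$ simultaneously, and the cross terms no longer organize into products of factors $(\alpha_i-1)$ once the higher ratios $m(R\cup B)/m(R)$ with $|B|\ge 2$ enter. You acknowledge this in your final paragraph, so what you have submitted is an outline with the decisive step left open rather than a proof. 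The argument in \cite{DeMo} closes this step by a different mechanism; your inductive scheme, as written, does not.
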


We will now make the first steps towards understanding the structure of these monoids.

\subsection{Slicing quasi-arithmetic matroids}\label{subs-slicing}

For any  $(E,\rk,m)$ in $\mathcal{Q}(M)$ and every prime $p$, set $v_p(m(A))$ to be the exponent of $p$ in the decomposition of the integer $m(A)$, with $A\subseteq E$. 

The set 
$$
\mathcal{Q}_p(M)=\{v_p\circ m\mid (E,\rk,m)\in \mathcal{Q}(M)\}
$$
is an additive submonoid of $\NN^{2^E}$. Following the approach of \cite[Section 5]{Fink-Moci(JEMS)} one can view $\mathcal Q_p(M)$ as the \emph{localization} of $\mathcal Q(M)$ at the prime $p$. 

\begin{remark}
After this slicing, axiom (A1) consists of inequalities that cut out a polyhedral cone, while axiom (A2) is made of equalities that determine a vector subspace, intersecting the cone into a polytope, whose set of points with nonnegative integer coordinates is  $\mathcal{Q}_p(M)$. It would be interesting to understand the properties of this polytope, and its relation with the polytope described in \cite{Fink-Moci(Advances)}.
\end{remark}

Clearly, for every two primes $p$ and $q$, the monoid $\mathcal{Q}_p(M)$ is isomorphic to $\mathcal{Q}_q(M)$. So we get the following result.

\begin{proposition}\label{infprod}
For every matroid $M$, the monoid  $\mathcal{Q}(M)$ is the direct product of monoids $\mathcal{Q}_p(M)$ (one for each prime number $p$), which are all isomorphic to each other. The projections of every element of $\mathcal{Q}(M)$ on the factors $\mathcal{Q}_p(M)$ are nontrivial only for a finite set of primes.  
\end{proposition}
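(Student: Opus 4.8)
The plan is to show that the ``prime-factorization'' map is the isomorphism in question. For a multiplicity $m$ on $M$, I would set $\Phi(m)=(v_p\circ m)_p$, the tuple of $p$-parts of $m$; by the very definition of $\mathcal{Q}_p(M)$ each $v_p\circ m$ lies in $\mathcal{Q}_p(M)$, so $\Phi$ lands in $\prod_p\mathcal{Q}_p(M)$. First I would check that $\Phi$ is a monoid homomorphism: the operation on $\mathcal{Q}(M)$ is the pointwise product of multiplicities, the operation on each $\mathcal{Q}_p(M)$ (and hence on the product) is pointwise addition in $\NN^{2^E}$, and $v_p(m'(A)m''(A))=v_p(m'(A))+v_p(m''(A))$; the identity multiplicity $m\equiv 1$ goes to the zero tuple. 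Injectivity is then immediate from unique factorization, since $m(A)=\prod_p p^{v_p(m(A))}$ recovers $m$ from $\Phi(m)$.

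Next I would establish the finite-support assertion, which is the last sentence of the statement: for a fixed $m$ the finitely many integers $m(A)$, $A\subseteq E$, have altogether only finitely many prime divisors, and $v_p\circ m$ is the zero function whenever $p$ divides no $m(A)$. So $\Phi$ is an isomorphism of $\mathcal{Q}(M)$ onto a submonoid of the finitely-supported tuples of $\prod_p\mathcal{Q}_p(M)$, i.e.\ of the (restricted) direct product $\bigoplus_p\mathcal{Q}_p(M)$.

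What remains — and what I expect to be the main point — is surjectivity onto all finitely-supported tuples. Given $(n_p)_p$ with $n_p\in\mathcal{Q}_p(M)$ and $n_p=0$ outside a finite set $S$, I would write $n_p=v_p\circ m_p$ with $m_p\in\mathcal{Q}(M)$ and define $m(A)=\prod_{p\in S} p^{n_p(A)}$; this is a finite product of prime powers, so $m\colon 2^E\to\NN$ is well defined and positive, and clearly $v_p\circ m=n_p$ for every $p$. The crux is then to verify that $m$ itself lies in $\mathcal{Q}(M)$, and here the key observation is that axioms (A1) and (A2) are ``prime-local'': $m(A)\mid m(A\cup\{e\})$ holds if and only if $n_p(A)\le n_p(A\cup\{e\})$ for every $p$ (and likewise for the other clause of (A1)), while the multiplicative identity $m(R)m(R\cup F\cup T)=m(R\cup F)m(R\cup T)$ in (A2) holds if and only if $n_p(R)+n_p(R\cup F\cup T)=n_p(R\cup F)+n_p(R\cup T)$ for every $p$. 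Since $n_p=v_p\circ m_p$ satisfies precisely these inequalities and equalities — they are obtained by applying $v_p$ to axioms (A1) and (A2) for $m_p$ — the function $m$ satisfies (A1) and (A2), so $m\in\mathcal{Q}(M)$ and $\Phi(m)=(n_p)_p$. The remaining claim, that the factors $\mathcal{Q}_p(M)$ are mutually isomorphic, is already recorded in the text; indeed the ``prime-local'' rewriting above shows that all of them coincide with the single submonoid of $\NN^{2^E}$ cut out by those ($p$-independent) inequalities and equations. This completes the plan.
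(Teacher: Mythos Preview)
Your proposal is correct and is exactly the argument the paper has in mind; indeed the paper does not prove this proposition at all, recording it as an immediate consequence of the prime-wise slicing set up just before, together with the observation that $\mathcal{Q}_p(M)\cong\mathcal{Q}_q(M)$ for all primes $p,q$. Your write-up simply makes explicit the homomorphism $\Phi$, its injectivity via unique factorization, the finite-support claim, and the surjectivity via the prime-local nature of (A1) and (A2)---all of which the authors take for granted.
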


\begin{remark}\label{unfortunately}
Unfortunately, the same slicing approach does not work in general for the submonoid $\mathcal{A}(M)$. Indeed, an inequality of type (P) is not equivalent to the system of inequalities given by its $p-$slices. 
\end{remark}

\subsection{The digraph associated to a matroid}

Let $M=(E, \rk)$ be a matroid. Define the graph $G_M$ as the (oriented) simple graph with vertices $2^E=\mathcal{P}(E)$. Two subsets are connected by an edge if and only if they differ from each other by adding one element: there is an edge directed from $A$ to $A\cup\{e\}$ if $\rk(A)<\rk(A\cup\{e\})$, and otherwise there is an edge from $A\cup\{e\}$ to $A$.

This graph gathers the inequalities on $v_p\circ m$ derived from (A1).
In this way, we get an acyclic directed graph $G_M$. The sinks of $G_M$ are precisely the bases of the matroid. Moreover, since the graph is acyclic and finite, for any vertex there exists a (non unique) directed path leading to a sink. Given a subset, a possible choice of such a path corresponds to  removing elements until getting an independent set, and then adding elements till  completing this independent set to a basis. By (A1), each of these operations corresponds to an edge oriented in the correct direction.

\begin{lemma}
A subset $B$ of $E$ is a basis for $M$ if and only if it is a sink in $G_M$.
\end{lemma}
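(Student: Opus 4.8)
The plan is to prove both implications directly from the definitions already in place. Recall that a sink in $G_M$ is a vertex $B\subseteq E$ with no outgoing edges: for every $e\in E\setminus B$ the edge between $B$ and $B\cup\{e\}$ is not directed out of $B$, which (by the construction of $G_M$) means $\rk(B\cup\{e\})=\rk(B)$; and for every $e\in B$, writing $B=(B\setminus\{e\})\cup\{e\}$, the edge between $B\setminus\{e\}$ and $B$ must not be directed out of $B$, i.e.\ it is directed from $B\setminus\{e\}$ to $B$, which means $\rk(B\setminus\{e\})<\rk(B)$.

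For the forward direction, suppose $B$ is a basis, so $B$ is independent, $\rk(B)=|B|$, and $\rk(B)=\rk(E)$ is maximal. Then for $e\in E\setminus B$ we have $\rk(B)\le\rk(B\cup\{e\})\le\rk(E)=\rk(B)$ by (R2), so $\rk(B\cup\{e\})=\rk(B)$ and the edge points into $B$. For $e\in B$, independence gives $\rk(B\setminus\{e\})=|B|-1<|B|=\rk(B)$, so again the edge points into $B$. Hence $B$ has no outgoing edge and is a sink.

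For the converse, suppose $B$ is a sink. First, $B$ is independent: if not, there is $e\in B$ with $\rk(B\setminus\{e\})=\rk(B)$ (a dependent set has a proper subset of the same rank obtainable by removing one suitable element — this uses submodularity (R3) together with (R1), or one simply invokes that in a matroid every dependent set contains a circuit and deleting a circuit element does not drop the rank), contradicting the sink condition on $e\in B$. Second, $B$ is maximal independent, i.e.\ spanning: if $\rk(B)<\rk(E)$, then since the rank function only changes by $0$ or $1$ when a single element is added and $E=B\cup(E\setminus B)$, there must be some $e\in E\setminus B$ with $\rk(B\cup\{e\})>\rk(B)$, giving an outgoing edge, a contradiction. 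So $B$ is a basis.

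The main obstacle is the small matroid-theoretic fact used in the converse, namely that a dependent subset $B$ contains an element $e$ with $\rk(B\setminus\{e\})=\rk(B)$; this is standard (existence of a circuit inside a dependent set, plus the fact that $\rk$ is unchanged on deleting a circuit element) but must be cited from or reduced to the rank axioms (R1)--(R3) rather than assumed. Everything else is a short monotonicity/cardinality argument using only (R1) and (R2), so once that lemma is in hand the proof is essentially immediate; alternatively one can bypass it by noting that $B$ is a sink iff $B$ is a vertex from which no directed path of positive length starts, and the excerpt has already observed that from any vertex a directed path reaches a sink by first deleting down to an independent set and then augmenting to a basis, so a sink cannot be strictly reduced or augmented, forcing $B$ to be an independent spanning set.
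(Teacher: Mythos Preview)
Your proof is correct and takes the same approach as the paper's one-sentence argument, which simply notes that a basis is exactly a set whose rank decreases when any element is removed and does not increase when any element is added---precisely the sink condition in $G_M$. The ``obstacle'' you flag is a standard matroid fact the paper tacitly assumes; note, however, that your alternative bypass via the path-to-sink remark would be circular, since that passage in the paper already presupposes the identification of sinks with bases.
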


\begin{proof}
Bases are sets such that whenever we add a new element the rank does not increase, and if we remove an element, then the rank decreases. This means precisely that the corresponding vertex in $G_M$ is a sink, that is, there are only incoming edges arriving to it.
\end{proof}

\begin{lemma}
The graph $G_M$ is acyclic.
\end{lemma}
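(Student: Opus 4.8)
The plan is to exhibit an integer-valued \emph{potential function} on the vertex set $2^E$ that strictly increases along every directed edge of $G_M$; once such a function is in hand, a directed cycle is immediately impossible, since going around it would return the potential to its starting value while having strictly increased it.

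First I would record the elementary fact that adjoining one element changes the rank by $0$ or $1$. Indeed $\rk(A)\le\rk(A\cup\{e\})$ by (R2), and applying (R3) to $X=A$, $Y=\{e\}$ together with (R1) gives $\rk(A\cup\{e\})\le\rk(A)+\rk(\{e\})\le\rk(A)+1$. Hence an edge joining $A$ and $A\cup\{e\}$ falls into exactly one of the two cases in the definition of $G_M$: either $\rk(A\cup\{e\})=\rk(A)+1$, and the edge is oriented $A\to A\cup\{e\}$, or $\rk(A\cup\{e\})=\rk(A)$, and the edge is oriented $A\cup\{e\}\to A$.

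Next I would set $\phi(A)=2\rk(A)-\lvert A\rvert$ and check that the head of every directed edge has $\phi$-value exactly $1$ larger than the tail. In the first case above, $\phi(A\cup\{e\})-\phi(A)=2\cdot 1-1=1$; in the second case, $\phi(A)-\phi(A\cup\{e\})=2\cdot 0-(-1)=1$. So $\phi$ is strictly increasing along every directed edge, hence strictly increasing along every directed path, and therefore $G_M$ contains no directed cycle.

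I do not expect a real obstacle here: the only point requiring a moment's care is the choice of coefficients in the linear form $\alpha\,\rk(A)+\beta\,\lvert A\rvert$, which must satisfy $\beta<0$ and $\alpha>\lvert\beta\rvert$ so that the increments $\alpha+\beta$ (rank-increasing edges) and $-\beta$ (the others) are both positive; $(\alpha,\beta)=(2,-1)$ is the simplest such choice. Equivalently, one can argue directly that along a hypothetical directed cycle the total change in $\lvert A\rvert$ is $0$, which forces the number of rank-increasing steps to equal the number of steps of the other type, while the total change in $\rk$ is also $0$ yet equals the number of rank-increasing steps (each contributes $+1$, the others contribute $0$); hence the cycle is empty, a contradiction. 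The potential function $\phi$ merely packages this counting argument.
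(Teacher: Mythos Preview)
Your proof is correct. The paper argues in two steps rather than with a single potential: since $\rk$ is nondecreasing along every directed edge, it must be constant around a putative cycle; but then every edge on the cycle is of the rank-preserving type and hence strictly decreases $\lvert A\rvert$, which is impossible for a closed walk. Your potential $\phi(A)=2\rk(A)-\lvert A\rvert$ packages these two observations into one invariant, and in fact yields a little more than acyclicity: it shows that $G_M$ is \emph{graded}, every directed edge raising $\phi$ by exactly $1$. The alternative counting argument you sketch at the end is essentially the paper's proof in disguise. Either route is perfectly adequate; yours is slightly slicker and reusable (any linear combination with $\alpha>-\beta>0$ works), while the paper's is marginally more elementary in that it never needs to name the potential.
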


\begin{proof}
Assume that $A=A_1,\ldots, A_n=A$ is a cycle starting in $A$. Then $\rk(A_{i})\le \rk(A_{i+1})$ for all possible $i$. Thus $\rk(A_i)=\rk(A)$ for all $i$. But then $A_{i+1}$ must have one element fewer than $A_i$ for all $i$, and this is impossible, because we are starting and ending in $A$.
\end{proof}

\begin{example}
Let $E=\{a,b\}$ and let $\rk(A)=1$ if $a\in A$ and $0$ otherwise. 
\begin{center}
\begin{tikzpicture}[->,>=stealth',auto,node distance=2cm, main node/.style={}]

\node[main node] (1) {$\{b\}$};
\node[main node] (2) [above left of=1] {$\emptyset$};
\node[main node] (3) [above right of=1] {$\{a,b\}$};
\node[main node] (4) [above left of=3] {$\{a\}$};

\path[every node/.style={font=\sffamily\small}]
(1) edge node [right] {} (2)
(2) edge node [left] {} (4);

\path[every node/.style={font=\sffamily\small}]
(1) edge node [left] {} (3)
(3) edge node [right] {} (4);
\end{tikzpicture}
\end{center}

\end{example}

Observe that in the above example, if $v_p(m(\{a\}))=0$, then (A1) implies that $v_p(m(A))=0$ for all $A$ such that there is a path from $A$ to $\{a\}$ in $G_M$, $M=(E, \rk)$. Thus in this case all nodes have $v_p\circ m$ equal to zero. Also $v_p(m(\emptyset))=0$ or $v_p(m(\{a,b\}))=0$ force $v_p(m(\{b\}))=0$. 

\begin{remark}\label{prop:pre-gm-mf}
Since $G_M$ is acyclic, its reflexive-transitive closure induces a partial order on $2^E$. The monoid of monotone functions over this poset is naturally  isomorphic to the monoid of functions 
$\{v_p\circ m\}$ where $m$ ranges over all the functions $m:2^E\to\mathbb N$ for which  axiom (A1) holds.
\end{remark}

\subsection{Uniform matroids}

From now on we focus on uniform matroids. The \emph{uniform matroid} $\operatorname{U}(r,n)$ is the matroid having $|E|=n$ and for every $A\subseteq E$, $\rk(A)=\min (|A|,r)$. It is realized by $n$ vectors in generic position in an $r-$dimensional vector space. This class of matroids is fundamental in matroid theory; moreover in this case quasi-arithmetic matroids are simpler to study, since an axiom becomes redundant, as the next results show.

\begin{lemma}\label{lem:uniform-no-molecules}
A matroid is uniform if and only if it has no nontrivial molecules.
\end{lemma}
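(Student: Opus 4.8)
The plan is to prove both directions of the equivalence by connecting the combinatorial notion of a nontrivial molecule with the defining property $\rk(A) = \min(|A|, r)$ of a uniform matroid.

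First I would recall the characterization quoted from \cite{BM}: a triple $(R,F,T)$ of pairwise disjoint sets is a molecule if and only if, after deleting $E \setminus (R \cup F \cup T)$ and contracting $R$, the set $F$ becomes a set of coloops and $T$ becomes a set of loops; and it is nontrivial precisely when both $F$ and $T$ are nonempty. For the forward direction, suppose $M = \operatorname{U}(r,n)$ and, toward a contradiction, that $(R,F,T)$ is a nontrivial molecule, so we may pick $f \in F$ and $t \in T$. Using the molecule condition $\rk(A) = \rk(R) + |A \cap F|$ for all $A$ with $R \subseteq A \subseteq R \cup F \cup T$, I would compare $A = R \cup \{f\}$, which forces $\rk(R \cup \{f\}) = \rk(R) + 1$, with $A = R \cup \{t\}$, which forces $\rk(R \cup \{t\}) = \rk(R)$. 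For a uniform matroid, $\rk(R \cup \{f\}) = \min(|R|+1, r)$ and $\rk(R \cup \{t\}) = \min(|R|+1, r)$ as well (since $|R \cup \{f\}| = |R \cup \{t\}| = |R|+1$); these two values are equal, contradicting the fact that one equals $\rk(R)+1$ and the other equals $\rk(R)$. This gives nonexistence of nontrivial molecules.

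For the converse, I would argue the contrapositive: if $M$ is not uniform, I will exhibit a nontrivial molecule. Since $M$ fails to be uniform, there exist sets of equal cardinality with different rank, and by a standard exchange/interpolation argument one can find a set $R$ and two elements $f, t \notin R$ with $\rk(R \cup \{f\}) = \rk(R) + 1$ but $\rk(R \cup \{t\}) = \rk(R)$ (an element whose addition raises the rank and another whose addition does not, over the same base set). After deleting everything outside $R \cup \{f,t\}$ and contracting $R$, the element $f$ becomes a coloop and $t$ becomes a loop, so $(R, \{f\}, \{t\})$ is a molecule, and it is nontrivial since $F = \{f\}$ and $T = \{t\}$ are both nonempty. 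The main obstacle is making this interpolation step precise: I need to guarantee that the ``rank-raising'' element and the ``rank-non-raising'' element can be taken over a \emph{common} base set $R$, rather than over two unrelated sets. I expect this follows from the submodularity axiom (R3) together with (R1) and (R2): if $|X| = |Y|$ but $\rk(X) < \rk(Y)$, one can walk from a set of small rank to a set of large rank one element at a time, and along the way some single-element addition must fail to raise the rank (forcing a future loop after contraction) while independence of a non-basis element supplies a coloop; the technical content is bookkeeping on ranks under deletion and contraction, which is routine but needs care.
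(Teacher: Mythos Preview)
Your proposal is correct. In the forward direction you give a clean direct argument from the fact that rank in a uniform matroid depends only on cardinality; the paper instead observes that the contraction of a uniform matroid by $R$ is again uniform, and a uniform matroid has no loops (if its rank is positive) or no coloops (if its rank is $0$), so one of $F,T$ must be empty. Both are short; yours is perhaps the more elementary.

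For the converse, your worry about the ``interpolation step'' dissolves with the paper's explicit construction, which you should adopt: since $M$ has rank $r$ but is not uniform, there is a set $A$ of cardinality $r$ that is not a basis; take $R$ to be a maximal independent subset of $A$, set $T=A\setminus R$, and extend $R$ to a basis $R\cup F$. Then $(R,F,T)$ is a nontrivial molecule, and the coloop-producing and loop-producing elements sit over the \emph{same} $R$ automatically---no interpolation needed. Your singleton variant $(R,\{f\},\{t\})$ is obtained by picking any $f\in F$ and $t\in T$. One small omission: to conclude that $(R,\{f\},\{t\})$ is a molecule (equivalently, that $f$ is a coloop after contraction) you also need $\rk(R\cup\{f,t\})=\rk(R)+1$; this follows from submodularity applied to $R\cup\{f\}$ and $R\cup\{t\}$ together with monotonicity, and is presumably what you had in mind under ``bookkeeping''.
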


\begin{proof}
\emph{Necessity.} Let $\alpha=(R,F,T)$ be a molecule. It is well-known that the contraction of a uniform matroid by any subset is a uniform matroid. Thus, after contracting a uniform matroid by $R$, we get a uniform matroid that by definition either has no coloops (if its rank is 0) or has no loops (otherwise). So either $F$ or $T$ is empty.

\emph{Sufficiency.} If a matroid of rank $r$ is not uniform, then by definition there exists a subset $A$ of cardinality $r$ that is not a basis. Then we can extract from it a maximal independent subset $I$. So $A= I \cup \{x_1,... x_k\}$. Furthermore $I$ can be completed to a basis, that is, there exist elements $y_i$ such that $I \cup \{y_1,... y_k\}$ is a basis. Then ($R=I$, $F=\{y_1,... y_k\}$, $T=\{x_1,... x_k\}$) is a nontrivial molecule.
\end{proof}

This lemma has a remarkable consequence: for uniform matroids axiom (A2) becomes redundant. Then a multiplicity  function $m$ gives a quasi-arithmetic matroid if and only if it satisfies (A1). 
Therefore we can slice ``prime by prime'' the monoid $\mathcal{Q}(\rU(r,n))$ like in Proposition~\ref{infprod}, and Remark~\ref{prop:pre-gm-mf} yields the following result.

\begin{proposition}\label{prop:gm-mf}
For every prime integer $p$, $\mathcal{Q}_p(\rU(r,n))$ is isomorphic to the monoid of monotone functions on $2^E$ with the order induced by $G_{\rU(r,n)}$.
\end{proposition}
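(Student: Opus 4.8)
The plan is to derive Proposition~\ref{prop:gm-mf} essentially by assembling the three ingredients that have already been prepared: Lemma~\ref{lem:uniform-no-molecules}, the slicing construction of Section~\ref{subs-slicing}, and Remark~\ref{prop:pre-gm-mf}. First I would recall that, by Lemma~\ref{lem:uniform-no-molecules}, the uniform matroid $\rU(r,n)$ has no nontrivial molecules, so axiom (A2) imposes no conditions: for any molecule $(R,F,T)$ one of $F$, $T$ is empty, and in that degenerate case the identity in (A2) reads $m(R)m(R\cup F\cup T)=m(R\cup F)m(R\cup T)$ with the two sides literally equal (if $T=\emptyset$ it becomes $m(R)m(R\cup F)=m(R\cup F)m(R)$, and symmetrically if $F=\emptyset$). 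Hence for $\rU(r,n)$ a function $m\colon 2^E\to\NN$ defines a quasi-arithmetic matroid if and only if it satisfies (A1) alone.

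Next I would pass to the $p$-slice. Fix a prime $p$. By definition $\mathcal{Q}_p(\rU(r,n))=\{v_p\circ m\mid (E,\rk,m)\in\mathcal Q(\rU(r,n))\}$. Since membership in $\mathcal Q(\rU(r,n))$ is equivalent to (A1), and (A1) translates under $v_p$ into the divisibility-free statement that $v_p\circ m$ is nondecreasing along every edge of $G_{\rU(r,n)}$ (an edge $A\to A\cup\{e\}$ with $\rk(A)<\rk(A\cup\{e\})$ forces $v_p(m(A))\le v_p(m(A\cup\{e\}))$, and an edge $A\cup\{e\}\to A$ with equal rank forces $v_p(m(A))\le v_p(m(A\cup\{e\}))$ as well), we get that $\mathcal Q_p(\rU(r,n))$ is exactly the set of functions $2^E\to\NN$ that are monotone for the partial order induced by the reflexive–transitive closure of $G_{\rU(r,n)}$. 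Here one should be a little careful that the converse inclusion holds: every monotone function $g$ on this poset is of the form $v_p\circ m$ for some $m$ satisfying (A1) — take $m=p^{g}$, i.e. $m(A)=p^{g(A)}$, which satisfies (A1) because (A1) on powers of $p$ is precisely monotonicity of the exponent; this is exactly the content of Remark~\ref{prop:pre-gm-mf}, so I would just cite it.

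Finally, matching addition: the monoid operation on $\mathcal Q(M)$ is pointwise multiplication of multiplicities, and $v_p(m'(A)m''(A))=v_p(m'(A))+v_p(m''(A))$, so under the identification the operation on $\mathcal Q_p(\rU(r,n))$ is pointwise addition of $\NN$-valued functions, which is precisely the monoid structure on $\rM$ of the poset. Putting the three steps together gives the claimed isomorphism with the monoid of monotone functions on $2^E$ ordered by $G_{\rU(r,n)}$. I do not expect a genuine obstacle here; the only point that needs a sentence of care is the surjectivity of $m\mapsto v_p\circ m$ onto the monotone functions (realizing a prescribed exponent pattern), and that is handled cleanly by the $m=p^{g}$ construction already recorded in Remark~\ref{prop:pre-gm-mf}.
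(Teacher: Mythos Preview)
Your argument is exactly the paper's: it deduces the proposition from Lemma~\ref{lem:uniform-no-molecules} (so (A2) is vacuous for $\rU(r,n)$) together with Remark~\ref{prop:pre-gm-mf} and the slicing of Section~\ref{subs-slicing}, and you have additionally made explicit the surjectivity via $m=p^{g}$ and the compatibility of the monoid operations. One small slip: in your parenthetical, the equal-rank case should read $v_p(m(A\cup\{e\}))\le v_p(m(A))$ (matching both (A1) and the edge direction $A\cup\{e\}\to A$), not the inequality you wrote.
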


Note that axiom $(P)$ yields nontrivial inequalities also for trivial molecules. Hence, for the reasons explained in Remark \ref{unfortunately}, the slicing approach does not apply to $\mathcal A(\rU(r,n))$. Hence the following problem is left for future work.

\begin{problem}
Describe the structure of the submonoid $\mathcal A(\rU(r,n))$ of $\mathcal Q(\rU(r,n))$.
\end{problem}

\subsection{The Gorenstein property and the Cohen-Macaulay type}

We can now apply the results proved in Section~\ref{sec:monotone-functions} to obtain a description of $\mathcal{Q}_p(\rU(r,n))$.

\begin{theorem}\label{prop:type}
Let $n$ be a positive integer.
\begin{enumerate}[1.]
\item  $\mathcal{Q}_p(\rU(k,n))$ is Gorenstein if and only if $k\in\{0, n/2, n\}$.
\item If $n\ge 2$, then the type of $\mathcal{Q}_p(\rU(n-1,n))$ is $n-1$.
\item If $n\ge 4$,  the type of $\mathcal{Q}_p(\rU(n-2,n))$ is $\sum_{i=1}^{n-3} (n-2-i)^n$.
\end{enumerate}
\end{theorem}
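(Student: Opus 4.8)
The strategy is to reduce everything to the poset $P$ underlying $\mathcal Q_p(\rU(k,n))$ — which by Proposition~\ref{prop:gm-mf} is the transitive closure of $G_{\rU(k,n)}$ on $2^E$ — and then to invoke Theorem~\ref{theo:Gor} and Proposition~\ref{rem:type}. So the first job is to understand this poset concretely. In $G_{\rU(k,n)}$ there is an edge from $A$ to $A\cup\{e\}$ precisely when $\rk(A)<\rk(A\cup\{e\})$, i.e. when $|A|<k$; and an edge from $A\cup\{e\}$ to $A$ precisely when $|A|\ge k$. Thus on subsets of size $\le k$ the order refines inclusion (going up in size goes up in the order), while on subsets of size $\ge k$ it reverses inclusion (going up in size goes down in the order). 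The minimal elements of $P$ are therefore exactly the bases, i.e. the subsets of size $k$ (they are the sinks of $G_M$, as noted in the excerpt), and $P$ has two "arms" hanging below each basis: one descending through the independent sets of size $<k$ contained in it, one descending through the spanning sets of size $>k$ containing it. I would make all of this precise as a preliminary lemma describing covers in $P$: $A$ is covered by $A\setminus\{e\}$ when $|A|\le k$ ($|A|\ge 1$), and $A$ is covered by $A\cup\{e\}$ when $|A|\ge k$ ($|A|\le n-1$).

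For part~(1), I would check the two conditions of Proposition~\ref{rem:type} — equivalently, the gradedness criterion in Theorem~\ref{theo:Gor}(\ref{item:gorenstein}). A chain ascending from a vertex $A$ of size $j\le k$ ends at a basis (size $k$) or, passing through a basis, continues up to a spanning set; symmetrically for $|A|\ge k$. One computes: from a subset of size $j$ the maximal chains ascending to a maximal element of $P$ reach a set of size $k$ then descend in $G_M$-terms but ascend in $P$-terms to size $n$, OR stop — wait, careful: maximal elements of $P$ are sources of $G_M$, which are $\emptyset$ and $E$ (size $0$ and size $n$). So a maximal chain in $P$ runs from a basis (size $k$) either up to $\emptyset$ (length $k+1$ as a chain of elements) or up to $E$ (length $n-k+1$). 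Gradedness requires every maximal element $z$ to have all descending chains to a minimal element of the same length, which is automatic here since each maximal element sits over a unique arm, AND requires the level function to be well-defined, i.e. $k+1 = n-k+1$ at any basis that can reach both — but a single basis reaches both $\emptyset$ and $E$. Hence gradedness forces $k+1=n-k+1$, i.e. $k=n/2$, UNLESS one of the two arms is empty, which happens exactly when $k=0$ (no independent-set arm below: $\emptyset$ is itself the unique basis) or $k=n$ ($E$ is the unique basis, no spanning arm). This gives the trichotomy $k\in\{0,n/2,n\}$.

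For parts~(2) and~(3) the poset has $k=n-1$ or $k=n-2$, so it is \emph{not} graded, but it does satisfy both hypotheses of Proposition~\ref{rem:type}: from any $x$ all inextensible ascending chains have the same length (the two arms below a basis have fixed lengths $k+1$ and $n-k+1$, and any ascending chain from an interior vertex is forced along one arm), and all maximal elements have the same maximal descending-chain length when $n-k\ge$ the other arm — one has to check which arm is longer. For $k=n-1$: the independent-arm has chains of length $n$ (sizes $0,1,\dots,n-1$), the spanning-arm has length $2$ (sizes $n-1,n$); the longer is the independent arm, $m=n$; by Proposition~\ref{rem:type} the type is the number of strictly monotone $\gamma$ minimal on all length-$n$ chains, which forces $\gamma$ on the whole independent arm and leaves free choices only on the single vertex $E$ (size $n$): $\gamma(E)$ must be strictly above $\gamma(\text{its unique basis})=n-1$ and... one counts the admissible values, getting $n-1$. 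For $k=n-2$ ($n\ge4$): the independent arm has length $n-1$ (sizes $0,\dots,n-2$), the spanning arm has length $3$ (sizes $n-2,n-1,n$), so again the independent arm is longer, $m=n-1$; $\gamma$ is forced on the independent arm, and the free part is the "fan" of spanning sets of sizes $n-1$ and $n$ above each basis, where $\gamma$ must be a strictly $G_M$-monotone (= inclusion-reversing) $\NN_{>0}$-valued function agreeing with the forced values at size $n-2$; counting such functions, weighting by which ones are minimal (no proper monotone $\delta$ subtractable), yields $\sum_{i=1}^{n-3}(n-2-i)^n$. The main obstacle, and the part that needs the most care, is exactly this last count: getting the indexing of the sum right requires understanding precisely the combinatorics of strictly-monotone labelings of the bipartite-like poset formed by the size-$(n-1)$ and size-$n$ subsets (each size-$(n-1)$ set lies below $\binom{?}{}$ choices, the unique size-$n$ set $E$ lies below all of them), together with the minimality condition from Proposition~\ref{rem:type}. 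I would isolate this as a self-contained combinatorial lemma and verify the closed form against the small cases $n=4,5$ computed with \texttt{Normaliz}.
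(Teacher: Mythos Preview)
Your overall strategy matches the paper's: reduce via Proposition~\ref{prop:gm-mf} to the poset $P$, then invoke Theorem~\ref{theo:Gor} for part~1 and Proposition~\ref{rem:type} for parts~2 and~3. The problem is that you have the orientation of $P$ backwards, and this is not cosmetic---it breaks both arguments as written.

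In the partial order induced by the acyclic graph $G_M$, an edge $A\to B$ gives $A\le B$; thus sinks are \emph{maximal} and sources are \emph{minimal}. Hence the bases are the maximal elements of $P$, while $\emptyset$ and $E$ are minimal---the opposite of what you assert (``the minimal elements of $P$ are \dots the bases''; ``maximal elements of $P$ are sources''). With your reversed orientation the poset is in fact graded for \emph{every} $k$: the function $\gamma(A)=\bigl|\,|A|-k\,\bigr|+1$ is a level function, since any two bases reach a given vertex along chains of equal length. So your argument for part~1 collapses. The constraint $k+1=n-k+1$ you write down is the right one, but it only arises in the correct orientation, where the two \emph{minimal} elements $\emptyset$ and $E$ feed into the \emph{same} maximal element (a basis) along chains of different lengths.

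The reversal also invalidates your appeal to Proposition~\ref{rem:type}. Hypothesis~1 there requires all inextensible ascending chains from a fixed $x$ to have equal length; with bases minimal, an ascending chain from a basis can terminate at $\emptyset$ (length $k+1$) or at $E$ (length $n-k+1$), and for $k=n-1,n-2$ these differ. In the correct orientation both hypotheses do hold, the maximal chains run from $\emptyset$ up to a basis, and $\gamma$ is forced along them: for $k=n-1$ this gives $\gamma(B)=n$ on every basis (not $n-1$ as you write), and the only freedom is $\gamma(E)\in\{1,\dots,n-1\}$. For $k=n-2$ the paper then does directly the count you propose to defer to a lemma: bases carry the forced value $n-1$, hence $\gamma(E)=i\in\{1,\dots,n-3\}$, and each of the $n$ subsets of size $n-1$---pairwise incomparable in $P$---independently takes a value in $\{i+1,\dots,n-2\}$, yielding $(n-2-i)^n$ choices and the stated sum.
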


\begin{proof}
By Theorem~\ref{theo:Gor} $\mathcal{Q}_p(\rU(k,n))$ is Gorenstein if and only if there exists a level function, and this is equivalent to the property that all maximal chains that connect minimal and maximal elements have the same length. This property is satisfied exactly in the cases listed in 1.

For 2 we apply Proposition~\ref{rem:type} whose hypotheses are evidently satisfied. In the particular case of $\mathcal{Q}_p(\rU(n-1,n))$, the chains joining the empty set with the sets of $n-1$ elements (the bases) must have values starting with 1 in the empty set, and ending with $n$ in the sets of $n-1$ elements. So the only value left to be assigned is that on the whole set. Since it must be less than $n$, the possibilities are in the set $\{1,\ldots,n-1\}$. For $i\in \{1,\ldots,n-1\}$, define $f_i$ as $f_i(A)=|A|+1$ if $A$ is a proper subset of $\{1,\ldots,n\}$, $|A|<n$, and $f_i(\{1,\ldots,n\})=i$. The functions $f_i$ are exactly those that appear in Proposition~\ref{rem:type}, and we have $n-1$ of them.

For $\rU(n-2,n)$ and $n$ large enough, we have chains joining the empty set with subsets of size $n-2$, all with the same length, and then chains joining the whole set with sets of size $n-1$ and sets of size $n-2$. Thus for the chains joining the empty set with subsets of size $n-2$ the minimal generators of the interior of the cone must have values ranging from $1$ to $n-1$. The value of these maps in the whole set can be between $1$ and $n-3$. Assume that $f$ is one of the generators and that its value in the whole set is $i\in\{1,\ldots,n-3\}$. Then the value in the $n$ different sets of $n-1$ elements must be in $\{i+1,\ldots, n-2\}$. So we have $(n-2-i)^n$ possibilities. This makes $\sum_{i=1}^{n-3} (n-2-i)^n$. Again one must of course argue that these functions generate the interior minimally.
\end{proof}

Since $\mathcal{Q}_p(\rU(k,n))\cong \mathcal{Q}_p(\rU(n-k,n))$, Proposition~\ref{prop:type} gives the types of all $\mathcal{Q}_p(\rU(k,n))$ with $n\le 6$.

\subsection{Irreducibles and primes}

For every $S\subseteq 2^E$, let $\chi_S$ be the indicator function of $S$. A function $m_S:2^E\to \mathbb N$ such that $\chi_S=v_p\circ m_S$ is, for example, the function defined as $m_S(A)=p$ if $A\in S$, $m_S(A)=1$ otherwise.

The results proved in Section~\ref{sec:monotone-functions} allow us to deduce the following facts.

\begin{theorem}\label{prop:uni_prime}\leavevmode
\begin{enumerate}[1.]
\item The irreducible elements in the monoid $\mathcal{Q}_p(\rU(k,n))$ are the elements $\chi_S$, where $S$ ranges over the upper irreducible sets.

\item The monoid $\mathcal{Q}_p(\rU(k,n))$ has no prime elements for $k\neq 0, n$. 

\item The only prime element in $\mathcal{Q}_p(\rU(0,n))$ is the element $\chi_{\UAR [1,n]}$;  the only prime element in $\mathcal{Q}_p(\rU(n,n))$ is the element $\chi_{\UAR\emptyset}$ .
\end{enumerate}
\end{theorem}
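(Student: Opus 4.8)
The plan is to identify $\mathcal{Q}_p(\rU(k,n))$ with $\rM(P)$ via Proposition~\ref{prop:gm-mf}, where $P=(2^E,\le)$ is the poset induced by the acyclic digraph $G_{\rU(k,n)}$, and to feed this into the results of Section~\ref{sec:monotone-functions}; with this identification item~1 is exactly Proposition~\ref{prop:irred-pos}. For items~2 and~3 I would first make $P$ explicit. Since $\rk(A)=\min(|A|,k)$, the edge of $G_{\rU(k,n)}$ joining $A$ and $A\cup\{e\}$ points $A\to A\cup\{e\}$ precisely when $|A|<k$, and $A\cup\{e\}\to A$ otherwise; hence $\le$ restricts to inclusion on the subsets of size $\le k$ and to reverse inclusion on those of size $\ge k$, the $k$-subsets are exactly the maximal elements of $P$, and $\uar\{v\}=\{A:v\subseteq A,\ |A|\le k\}$ if $|v|\le k$ while $\uar\{v\}=\{A:A\subseteq v,\ |A|\ge k\}$ if $|v|\ge k$. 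In particular: for $k=n$ the order is inclusion, so $P$ is the Boolean lattice $B_n$ with minimum $\emptyset$; for $k=0$ it is reverse inclusion, so $P$ has minimum $[1,n]$ and $A\mapsto E\setminus A$ is a poset isomorphism $P\to B_n$ carrying $[1,n]$ to $\emptyset$; and for $0<k<n$ one checks that the only minimal elements of $P$ are $\emptyset$ and $E$, while $P$ is connected because every $k$-subset $M$ satisfies $\emptyset\le M$ and $E\le M$, so $P$ is an irreducible upper set that is not principal.

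For item~3, assume first $k=n$. Then $\chi_P=\chi_{\uar\{\emptyset\}}$ is prime by Corollary~\ref{cor:minimum}. For uniqueness I would use Theorem~\ref{th:char-prime-comb}: a prime must be some $\chi_{\uar\{v\}}$, and since $\uar\{x\}\cap\uar\{v\}$ always contains $x\cup v$, condition~2 of that theorem forces $v$ to be $\subseteq$-comparable to every subset of $E$, hence $v\in\{\emptyset,E\}$; and (for $n\ge2$) $v=E$ is excluded by condition~3, because $2^E\setminus\{E\}$ has the pairwise incomparable coatoms of $B_n$ as its maximal elements, hence no maximum. Thus $v=\emptyset$. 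The case $k=0$ follows by transporting everything along $A\mapsto E\setminus A$, which turns the unique prime into $\chi_{\uar\{[1,n]\}}$.

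For item~2, fix $0<k<n$ and an irreducible upper set $I$; I show $\chi_I$ is not prime by contradicting a condition of Theorem~\ref{th:char-prime-comb}. If $I$ is not principal, condition~1 fails, which already covers $I=P$. Otherwise $I=\uar\{v\}$ with $|v|=j$. If $j<k$, pick $x\supseteq v$ with $|x|=k+1$: then $x\notin I$, the set $\uar\{x\}\cap I$ contains a $k$-element set between $v$ and $x$ and so is nonempty, yet $x\not\le v$, because from a set of size $>k$ one only reaches its subsets of size $\ge k$ while $|v|<k$; hence condition~2 fails. The case $j>k$ is symmetric, and also follows formally from the isomorphism $\mathcal{Q}_p(\rU(k,n))\cong\mathcal{Q}_p(\rU(n-k,n))$, which is induced by $A\mapsto E\setminus A$ and turns a $j$-set into an $(n-j)$-set. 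Finally, if $j=k$ then $I=\{v\}$ is a single maximal element, and the set occurring in condition~3 is $\{x:x<v\}$; it is nonempty and contains the incomparable elements $\emptyset$ and $v\cup\{e\}$ (with $e\in E\setminus v$, which exists as $k<n$), and these have no common upper bound inside it — such a bound $w$ would have $|w|\le k$ (being $\ge\emptyset$), $|w|\ge k$ (being $\ge v\cup\{e\}$), hence $|w|=k$, so $w$ is maximal, and then $w\le v$ forces $w=v$, contradicting $w<v$. So condition~3 fails, and $\mathcal{Q}_p(\rU(k,n))$ has no prime elements for $0<k<n$.

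I expect the main obstacle to be the case $j=k$ of item~2: the other irreducible upper sets are ruled out simply by the ``size'' of $\uar\{v\}$, but a single maximal vertex can only be excluded via the finer conditions that separate prime near-chain generators from the others (condition~3, or equally condition~1 of Theorem~\ref{th:char-prime} applied with $P=\uar\{\emptyset\}\cup\uar\{E\}$); some care is also needed in the degenerate small cases ($k=1$, and $n\le1$ in item~3).
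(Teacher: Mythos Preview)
Your proof is correct and follows exactly the route the paper takes: item~1 is Proposition~\ref{prop:irred-pos} transported through Proposition~\ref{prop:gm-mf}, and items~2--3 are obtained by checking the criteria of Theorem~\ref{th:char-prime-comb} (together with Corollary~\ref{cor:minimum}) on the poset induced by $G_{\rU(k,n)}$. The paper's own proof merely asserts that these criteria are ``clearly'' satisfied or violated; you have supplied the explicit verification (including the case split on $|v|$ and the delicate $j=k$ case), and your caution about the degenerate case $n\le 1$ in item~3 is well placed.
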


\begin{proof}
The first statement follows immediately from Proposition 2. The second and third statements are a consequence of Theorem~\ref{th:char-prime-comb}: indeed this criterion is clearly satisfied by the element $\chi_{\UAR\emptyset}$ if $k=n$ and by the element $\chi_{\UAR[1,n]}$ if $k=0$; moreover it is clearly violated by any other element.
\end{proof}

\begin{remark} 
When the rank of the uniform matroid is 0 (or dually, when it is maximal) then the irreducible monotone functions on the directed graph are simply what are called monotone Boolean functions. Hence the sequence of the number of irreducible multiplicities for the uniform matroid $\operatorname{U}(0,n)$ is given by the so-called \emph{Dedekind numbers}: see \cite{dedekind} or the OEIS \cite{oeis} sequence A014466: 

$$
\{1, 2, 5, 19, 167, 7580, 7828353, 2414682040997, 56130437228687557907787,\dots\}.
$$
(The following terms of the sequence are unknown.)

However, the number of irreducibles for $\operatorname{U}(k,n)$, $k\not\in\{ 0, n\}$ seem not correspond to any known OEIS (bi)sequence. For $n\le 6$ the numbers are given in Table~\ref{NumberIrred} (up to the symmetry between  $\operatorname{U}(k,n)$ and $\operatorname{U}(n-k,n)$). These numbers where computed with \texttt{Normaliz} \cite{normaliz} and a special program for $n=6$.
\end{remark} 

Despite many efforts, a closed formula for Dedekind numbers has never been found. Then it seems hopeless that a closed formula could be found for the number of irreducibles of $\mathcal{Q}_p(\operatorname{U}(k,n))$. However, it would be interesting to give some estimates or bounds.

\begin{problem}
Provide upper and lower bounds for the number of irreducibles of $\mathcal{Q}_p(\operatorname{U}(k,n))$.
\end{problem}

\begin{table}
\begin{tabular}{r|r|r|r|r|}
&$k=0$&1&2&3\\
\hline
$n=1$&2&&&\\
\hline
2&5&5&&\\
\hline
3&19&20&&\\
\hline
4 &167&228&290&\\
\hline
5&7580&13727&47507&\\
\hline
6&7828353&15568259&242938059&1604376245\\
\hline
\end{tabular}
\vspace*{2ex}
\caption{Number of irreducibles for $\operatorname{U}(k,n)$.}\label{NumberIrred}
\end{table}

The following two conjectures were checked for all the matroids on $n\leq 4$ elements. Beyond experimental evidence, the intuition is that matroids having more bases are likely to give rise to monoids with more irreducibles.

\begin{conjecture}
The number of irreducibles of $\mathcal{Q}_p(\operatorname{U}(\lfloor n/2\rfloor,n))$ is an upper bound for the number of irreducibles of any uniform matroid on $n$ elements.
\end{conjecture}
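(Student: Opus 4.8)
By Proposition~\ref{prop:gm-mf} and Proposition~\ref{prop:irred-pos}, the number of irreducibles of $\mathcal Q_p(\rU(k,n))$ equals the number of irreducible upper sets of the poset $P_{k,n}$ on $2^{[n]}$ induced by $G_{\rU(k,n)}$, and by Lemma~\ref{lem:exp-union-irr} these are exactly the upper sets that are connected in the comparability graph of $P_{k,n}$. So the first step is to describe $P_{k,n}$ explicitly. Since $\operatorname{rk}(A)=\min(|A|,k)$, one checks from the definition of $G_{\rU(k,n)}$ that $A\le B$ in $P_{k,n}$ if and only if either $A\subseteq B$ with $|B|\le k$, or $B\subseteq A$ with $|B|\ge k$. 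Thus $P_{k,n}$ is the Boolean lattice truncated at level $k$ and ordered by inclusion, glued to the Boolean lattice on the levels $\ge k$ ordered by reverse inclusion, along the antichain $\binom{[n]}{k}$ of bases --- which is precisely its set of maximal elements (the minimal ones being $\emptyset$ when $k\ge 1$ and $[n]$ when $k\le n-1$). Write $f(k,n)$ for the number of connected upper sets of $P_{k,n}$. Complementation $A\mapsto[n]\setminus A$ is an isomorphism $\rU(k,n)\cong\rU(n-k,n)$, so $f(k,n)=f(n-k,n)$, and it suffices to prove that $f(k,n)\le f(k+1,n)$ for every $0\le k<\lfloor n/2\rfloor$.

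The plan is to construct, for each such $k$, an injection $\Phi_k$ from the connected upper sets of $P_{k,n}$ into those of $P_{k+1,n}$. Passing from $P_{k,n}$ to $P_{k+1,n}$ lengthens the ascending (inclusion) arm by one level and shortens the descending one, so heuristically there is ``more room'' the closer $k$ is to $n/2$. Given a connected upper set $U$ of $P_{k,n}$, write $U=U_-\sqcup W\sqcup U_+$ according to whether a set has size $<k$, $=k$, or $>k$; here $U_-\cup W$ is an upper set of the truncated lattice $B_n^{\le k}$ and $W\cup U_+$ is an upper set of the reverse-inclusion arm, the two sharing $W$. The candidate $\Phi_k(U)$ keeps $U_-$ and the members of $U_+$ of size $>k+1$ unchanged, replaces $W$ by the upper set of $B_n^{\le k+1}$ generated by $U_-\cup W$ (this only adjoins certain sets of size $k+1$), and finally adjoins the size-$(k+1)$ members of $U_+$. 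One then has to check: (a) $\Phi_k(U)$ is an upper set of $P_{k+1,n}$ --- this reduces to the two arm conditions and uses that, since $U$ is an upper set of $P_{k,n}$, every size-$k$ subset of a set of $U_+$ already lies in $W$; (b) $\Phi_k(U)$ is connected --- by the same observation, every element of $\Phi_k(U)$ is comparable through $W$ to the rest, and $W\ne\emptyset$ since $U\ne\emptyset$; (c) $\Phi_k$ is injective.

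Injectivity is the crux, and the naive candidate above fails: a set of size $k+1$ that is a superset of some member of $U_-\cup W$ and simultaneously belongs to $U_+$ is adjoined ``from both arms'', so $\Phi_k(U)$, viewed merely as a family of subsets, does not record whether it came from the ascending or the descending data of $U$, and distinct $U$'s collide already for $\rU(0,2)\to\rU(1,2)$. A correct argument therefore needs a genuinely different construction --- not ``$U$ together with forced sets'' --- most plausibly a \emph{compression} (shifting) operator in the style of extremal set theory, arranged so that its image lies in $P_{k+1,n}$, stays connected, and determines $U$ uniquely. The main alternative, perhaps more robust, is to find an algebraic model: to realize $f(k,n)$, or a refinement graded by $|W|$, as the Hilbert function of a graded algebra or the dimension sequence of a module to which a hard-Lefschetz or log-concavity principle applies, so that unimodality in $k$ follows formally. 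The monoid rings $K[\rM(P_{k,n})]$ --- normal Cohen--Macaulay domains, Gorenstein exactly for $k\in\{0,n/2,n\}$ by Theorems~\ref{theo:Gor} and~\ref{prop:type} --- together with classical unimodality phenomena for the Boolean lattice, are the natural objects to try. Once a valid injection is available, by contrast, parts (a) and (b) should be routine, since the promotion operations never disconnect the comparability graph.
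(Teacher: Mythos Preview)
The statement you are addressing is a \emph{conjecture} in the paper; there is no proof in the paper to compare against. The authors only report that it has been checked for all matroids on $n\le 4$ elements and offer the heuristic that matroids with more bases should give more irreducibles. So the relevant question is simply whether your proposal constitutes a proof.

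It does not, and you say so yourself. After setting up the correct description of $P_{k,n}$ and reducing the conjecture to the unimodality statement $f(k,n)\le f(k+1,n)$ for $k<\lfloor n/2\rfloor$, you define a candidate map $\Phi_k$, then explicitly note that injectivity fails already for $k=0$, $n=2$. From that point on the text is a list of possible repair strategies (a shifting/compression operator, or a hard-Lefschetz style argument on some auxiliary algebra), none of which is carried out. A proposal that ends with ``once a valid injection is available, parts (a) and (b) should be routine'' is not a proof; the valid injection is precisely the missing content. The reduction and the structural description of $P_{k,n}$ are correct and useful, but everything after the acknowledged failure of $\Phi_k$ is speculation. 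If you want to pursue the injective route, note that for $n=2$ one has $f(0,2)=f(1,2)=5$, so any correct $\Phi_0$ must in fact be a bijection there; this is a good sanity check for whatever replacement construction you try.
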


\begin{conjecture}
The number of irreducibles of $\mathcal{Q}_p(\operatorname{U}(k,n))$ is an upper bound for the number of irreducibles of $\mathcal{Q}_p(M)$, where $M$ ranges over all matroids of rank $k$ on $n$ elements.
\end{conjecture}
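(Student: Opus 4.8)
The plan is to recast the statement as a comparison of Hilbert bases and then to look for a surjection between the relevant monoids. First I would note that $\mathcal{Q}_p(M)$ is always a normal affine monoid: by the slicing discussion, after fixing $p$ axiom $(A1)$ turns into inequalities of the form $x_A\le x_B$ and axiom $(A2)$ into homogeneous linear equations, so $\mathcal{Q}_p(M)$ is exactly the set of lattice points of a rational cone; in particular it is generated by its finitely many irreducibles, and these form its unique minimal system of generators (compare the proof of Proposition~\ref{prop:irred-pos}), so the quantity to estimate is the size of this Hilbert basis. For $M=\rU(k,n)$ there are no $(A2)$-equations (Lemma~\ref{lem:uniform-no-molecules}), so $\mathcal{Q}_p(\rU(k,n))\cong\rM(P)$ with $P$ the poset induced by $G_{\rU(k,n)}$ (Proposition~\ref{prop:gm-mf}), whose irreducibles are the $\chi_I$ for $I$ an irreducible upper set (Theorem~\ref{prop:uni_prime}).

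The core of the argument would be to construct, for every matroid $M$ of rank $k$ on $n$ elements, a surjective monoid homomorphism $\pi_M\colon\mathcal{Q}_p(\rU(k,n))\to\mathcal{Q}_p(M)$. Granting this, the image under $\pi_M$ of a minimal generating set of $\mathcal{Q}_p(\rU(k,n))$ generates $\mathcal{Q}_p(M)$, hence the number of irreducibles of $\mathcal{Q}_p(M)$ is at most that of $\mathcal{Q}_p(\rU(k,n))$, which is exactly the assertion. To build $\pi_M$ I would try to descend through the weak (map) order on matroids of fixed rank $k$ and fixed ground set, which has $\rU(k,n)$ as its unique maximum: one reduces to an elementary descent --- the basic one being the relaxation of a circuit--hyperplane $H$, which at the level of $G_M$ simply reverses all the edges incident to the vertex $H$ --- exhibits an explicit surjection between the two monoids of monotone functions for such a local change (for instance by ``splitting'' the vertex $H$, or by retracting onto a subposet), and then composes these along a chain from $\rU(k,n)$ down to $M$.

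The hard part --- and the reason this is stated only as a conjecture --- is the interaction with axiom $(A2)$. By Lemma~\ref{lem:uniform-no-molecules} every \emph{non}-uniform $M$ has a nontrivial molecule, so $\mathcal{Q}_p(M)=\rM(P_M)\cap L$ for a genuinely nontrivial linear subspace $L$; since intersecting a cone with a subspace can \emph{enlarge} the Hilbert basis, one cannot argue naively, and, worse, a single relaxation may create or destroy molecules, so the $(A2)$-subspaces at consecutive steps differ and it is far from clear that any natural coordinate map descends to a well-defined surjection of the sliced monoids. Even granting a clean description of each elementary step, one must also know that the portion of the weak order between $M$ and $\rU(k,n)$ is generated by such steps, which is itself a delicate point.

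Should the inductive approach stall, an alternative is a direct comparison of the cones: bound the number of irreducibles of $\mathcal{Q}_p(M)$ from above in terms of the normalized volume of the associated polytope, or the number of maximal faces of its cone, and match this against the count for $\rU(k,n)$ coming from its unimodular triangulation by near-chains. Converting such an estimate into the \emph{exact} inequality of the conjecture, however, seems to require an idea we do not presently have, which is consistent with the conjecture remaining open.
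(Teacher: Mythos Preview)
The paper offers no proof of this statement: it is explicitly labeled a \emph{Conjecture}, and the only support given is the remark that it ``was checked for all the matroids on $n\le 4$ elements.'' There is therefore nothing to compare your proposal against.

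Your write-up is honest about this: you do not claim a proof, you sketch a plausible strategy (produce a surjection $\mathcal{Q}_p(\rU(k,n))\twoheadrightarrow\mathcal{Q}_p(M)$ and push irreducibles forward), and you correctly isolate the obstruction---for non-uniform $M$ the axiom $(A2)$ cuts $\rM(P_M)$ by a nontrivial linear subspace, and intersecting a normal monoid with a sublattice can genuinely enlarge the Hilbert basis, so no naive restriction or projection map will obviously work. That diagnosis is accurate and is exactly why the paper leaves the statement as a conjecture. Your alternative idea of bounding via volumes or face counts is also reasonable as a heuristic but, as you note, would at best give an asymptotic or approximate inequality rather than the sharp one conjectured.

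In short: neither you nor the paper proves the statement, and your discussion of why the obvious approaches stall is consistent with the paper's decision to leave it open.
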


\end{document}